\documentclass[preprint,12pt,3p]{elsarticle}

\usepackage{amsmath,amssymb,amsthm,mathtools}
\usepackage[hidelinks]{hyperref}

\biboptions{sort&compress}

\newtheorem{theorem}{Theorem}[section]
\newtheorem{lemma}[theorem]{Lemma}
\newtheorem{proposition}[theorem]{Proposition}
\newtheorem{corollary}[theorem]{Corollary}
\newtheorem{conjecture}[theorem]{Conjecture}
\theoremstyle{definition}
\newtheorem{definition}[theorem]{Definition}
\newtheorem{example}[theorem]{Example}
\theoremstyle{remark}
\newtheorem{remark}[theorem]{Remark}
\newtheorem*{maintheorem}{\bf Main Theorem}
\newcommand{\LC}{\operatorname{LC}}
\newcommand{\LT}{\operatorname{LT}}
\newcommand{\LM}{\operatorname{LM}}
\newcommand{\rank}{\operatorname{rank}}

\begin{document}
	
	\begin{frontmatter}
		
		\title{Reduction of the Finite Generation Problem for Leading Term
			Ideals under Arbitrary Rational Monomial Orders to the Lexicographic}
		
		\author[stu]{Xiaopeng Zheng\corref{cor1}}
		\ead{xiaopengzheng@stu.edu.cn}
		\cortext[cor1]{Corresponding author}
		
		\address[stu]{College of Mathematics and Computer Science, Shantou University,
			Shantou 515821, China}
		
		\begin{abstract}
			A commutative ring $R$ is called a Gr\"obner ring if, for every
			$n\geq 1$, the leading term ideal of every finitely generated ideal
			of $R[X_1,\ldots,X_n]$ is finitely generated with respect to the
			lexicographic order $X_1\succ\cdots\succ X_n$. We prove that this
			property is equivalent to the
			condition that, for every $n\geq 1$, every rational monomial order
			$\prec$ on $R[X_1,\ldots,X_n]$, and every finitely generated ideal
			$I\subseteq R[X_1,\ldots,X_n]$, the leading term ideal
			$\operatorname{LT}_{\prec}(I)$ is finitely generated. The
			construction uses a tagged monomial embedding, a compatible group
			grading, and dehomogenization. As an application, we apply this reduction to valuation rings and
			determine when the finite generation property holds for every rational
			monomial order. In particular, for valuation domains, this gives a proof
			of the rational monomial order version of the Gr\"obner ring conjecture.
		\end{abstract}
		\begin{keyword}
			leading term ideal \sep Gr\"obner ring \sep rational monomial order
			\sep graded ideal
		\MSC[2020] 13P10 \sep 13B25 \sep 13F30
		\end{keyword}

	\end{frontmatter}
	

	\section{Introduction}
	
	Let $R$ be a commutative ring with identity, let
	$A=R[X_1,\ldots,X_n]$, and fix a monomial order $\prec$ on the
	monomials of $A$. For an ideal $I\subseteq A$, write
	$\LT_\prec(I)=\langle \LT_\prec(f)\mid 0\neq f\in I\rangle$ for its
	leading term ideal. When $I$ is finitely generated, finite generation
	of $\LT_\prec(I)$ is equivalent to the existence of a finite
	Gr\"obner basis of $I$ with respect to $\prec$. This finiteness is
	automatic over Noetherian coefficient rings, but it can fail over
	non-Noetherian rings even when $I$ itself is finitely generated
	\cite{AdamsLoustaunau1994,Pauer2007,MonceurYengui2012,
		PolaYengui2012Negative}.
	
	A ring $R$ is
	$n$-Gr\"obner if every finitely generated ideal of
	$R[X_1,\ldots,X_n]$ has a finitely generated leading term ideal for
	the lexicographic order $X_1\succ\cdots\succ X_n$, and $R$ is a
	Gr\"obner ring if it is $n$-Gr\"obner for every $n\geq 1$
	\cite[Definition~215]{Yengui2015Book}. The definition is formulated
	there in a constructive setting for strongly discrete coherent
	rings. The underlying finiteness property makes sense for every
	commutative ring, and we use the same terminology in that generality.
	Thus, throughout this paper, the expression ``Gr\"obner ring'' refers
	to the lexicographic definition from Yengui's book. This convention
	is slightly different from the earlier existential elimination-order
	formulation in \cite{PolaYengui2014}.
	
	The Leading Terms Ideals Conjecture relates finite generation for
	arbitrary monomial orders to the Gr\"obner and $1$-Gr\"obner
	properties \cite[Conjecture~225]{Yengui2015Book}. A subsequent
	counterexample shows that the condition involving all monomial
	orders is false in general \cite{Yengui2021Counterexample}. We
	therefore formulate the following updated version by restricting
	this condition to rational monomial orders.
	
	\begin{conjecture}[The Leading Terms Ideals Conjecture
		(updated version)]
		\label{conj:updated-leading-terms}
		Let $R$ be a strongly discrete coherent ring. Then the following
		conditions are equivalent.
		\begin{enumerate}
			\item[(1)] For every $n\geq 1$, every rational monomial order
			$\prec$ on $R[X_1,\ldots,X_n]$, and every finitely generated
			ideal $I\subseteq R[X_1,\ldots,X_n]$, the leading term ideal
			$\LT_\prec(I)$ is finitely generated.
			
			\item[(2)] The ring $R$ is a Gr\"obner ring.
			
			\item[(3)] The ring $R$ is $1$-Gr\"obner.
		\end{enumerate}
	\end{conjecture}
	
	The main result of this paper proves the equivalence of
	assertions~{\rm(1)} and~{\rm(2)}. In fact, our result holds in
	greater generality than required in
	Conjecture~\ref{conj:updated-leading-terms}, since no discreteness
	or coherence assumption on the coefficient ring is needed.
	
	\begin{maintheorem}
		\label{thm:rational-grobner-characterization}
		Let $R$ be a commutative ring with identity. Then the following
		conditions are equivalent.
		\begin{enumerate}
			\item[(1)] For every $n\geq 1$, every rational monomial order
			$\prec$ on $R[X_1,\ldots,X_n]$, and every finitely generated
			ideal $I\subseteq R[X_1,\ldots,X_n]$, the leading term ideal
			$\LT_\prec(I)$ is finitely generated.
			
			\item[(2)] The ring $R$ is a Gr\"obner ring.
		\end{enumerate}
	\end{maintheorem}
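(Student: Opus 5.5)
The direction (1)$\Rightarrow$(2) is immediate, since the lexicographic order $X_1\succ\cdots\succ X_n$ is itself a rational monomial order. All the work is in (2)$\Rightarrow$(1). Fix $n$, a rational monomial order $\prec$ and a finitely generated ideal $I=\langle f_1,\ldots,f_s\rangle\subseteq A=R[X_1,\ldots,X_n]$.

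\emph{Reduction to an integer matrix.} Clearing denominators, I represent $\prec$ by an integer matrix $M\in\mathbb{Z}^{k\times n}$ whose rows $w_1,\ldots,w_k$ are compared lexicographically, with lex on exponents as the final tie-break. Thus $X^a\prec X^b$ iff $(Ma,a)<_{\mathrm{lex}}(Mb,b)$. Entries of $M$ may be negative. This is harmless because I only ever compare monomials inside one homogeneous component.

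\emph{Tagged embedding and grading.} Let $B=R[T_1,\ldots,T_k,X_1,\ldots,X_n]$ carry the lexicographic order $T_1\succ\cdots\succ T_k\succ X_1\succ\cdots\succ X_n$. Grade $B$ by $\mathbb{Z}^k\times\mathbb{Z}^n$ with $\deg T_i=(e_i,0)$ and $\deg X_j=(-Me_j,e_j)$. Then the term $T^{c}X^a$ has degree $(c-Ma,a)$. For $f=\sum c_aX^a\in A$, let $m\in\mathbb{Z}^k$ be the componentwise minimum of the vectors $Ma$ over the support of $f$. Define the homogenization $f^h=\sum c_a T^{Ma-m}X^a$. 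All its terms have the same $\mathbb{Z}^k$-component of degree, namely $-m$. The key observation is that on a set of terms sharing this $\mathbb{Z}^k$-degree, the lex order of $B$ compares first $c=Ma-m$ lexicographically and then $a$, which is exactly $\prec$. Hence $\LT_{\mathrm{lex}}(f^h)$ dehomogenizes under $T_i\mapsto1$ to $\LT_\prec(f)$. I will use only the $\mathbb{Z}^k$-part of the grading; the $\mathbb{Z}^n$ coordinates just record $a$.

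\emph{The homogenized ideal.} Let $I^h\subseteq B$ be the ideal generated by all $f^h$ with $f\in I$. Equivalently, it is the set of elements all of whose $\mathbb{Z}^k$-homogeneous components dehomogenize into $I$. The claims are:
\begin{enumerate}
\item[(i)] $I^h=\langle f_1^h,\ldots,f_s^h\rangle:(T_1\cdots T_k)^\infty$;
\item[(ii)] this saturation is finitely generated and its lex leading term ideal is finitely generated;
\item[(iii)] $\LT_\prec(I)$ is the image of $\LT_{\mathrm{lex}}(I^h)$ under $T_i\mapsto1$.
\end{enumerate}
For (i), I argue that each homogeneous $g$ with $g(1,X)\in I$ satisfies $T^{N}g\in\langle f_i^h\rangle$ for some $N$. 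This holds by homogenizing an expression $g(1,X)=\sum h_if_i$ degree by degree. For (ii), the saturation is the elimination ideal $\bigl(\langle f_i^h\rangle+\langle 1-ST_1\cdots T_k\rangle\bigr)\cap B$ in $R[S,T,X]$, computed with lex $S\succ T_1\succ\cdots$. Since $R$ is Gr\"obner, the leading term ideal of this finitely generated ideal is finitely generated, so a finite Gr\"obner basis exists. The lex elimination property holds over any ring: the basis elements free of $S$ form a Gr\"obner basis of the intersection, so $\LT_{\mathrm{lex}}(I^h)$ is finitely generated. For (iii), $\LT_{\mathrm{lex}}(I^h)$ is generated by the leading terms of homogeneous elements, since $I^h$ is graded and lex respects the tagging as above. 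Dehomogenizing a finite generating set then gives finitely many generators of $\LT_\prec(I)$. For the reverse inclusion, every $\LT_\prec(f)$ arises as the dehomogenization of $\LT_{\mathrm{lex}}(f^h)$.

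\emph{Main obstacle.} I expect (iii) to be the main obstacle, and in particular the claim that $\LT_{\mathrm{lex}}(I^h)$ is generated by leading terms of homogeneous elements. Over a non-domain, cancellation among leading coefficients from different components could occur. I would handle this by showing that the leading term of any $g\in I^h$ equals the leading term of one of its homogeneous components, namely the component containing the lex-largest term. Then I check that dehomogenization maps a monomial ideal of $B$ generated by terms $cT^{u}X^a$ onto the ideal of $A$ generated by the $cX^a$.
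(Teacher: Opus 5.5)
Your proof is correct, but it takes a longer route than the paper's.

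\textbf{The paper's route.} The paper first normalizes the representing matrix to $M\in\mathbb N^{n\times n}$ of rank $n$, and tags without any shift: $\Psi_M(X^\alpha)=U^{M\alpha}X^\alpha$. Because $M$ is nonnegative, $\Psi_M$ is a ring homomorphism $A\to S$. So $g=\sum q_if_i$ immediately gives $\Psi_M(g)=\sum\Psi_M(q_i)\Psi_M(f_i)\in H=\langle\Psi_M(f_1),\ldots,\Psi_M(f_s)\rangle$. The Gr\"obner hypothesis is then applied to $H$ itself in $2n$ variables, and the descent is exactly your step (iii). Your resolution of the ``main obstacle'' is the paper's Corollary~\ref{cor:component-containing-leading-term}. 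There is no cancellation issue there, because the homogeneous components partition the support.

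\textbf{Your route.} Your shifted homogenization $f^h$ works for integer matrices with negative entries and more than $n$ rows, so you do not need the nonnegative normalization. The price is that $f^h$ is not multiplicative. That is why you pass to the saturation, then to a Rabinowitsch variable and lex elimination in $k+n+1$ variables. The elimination step is valid over any ring: a term $cY^a$ lies in $\langle c_im_i\rangle$ iff $c\in\langle c_i : m_i\mid Y^a\rangle$, and a basis element whose leading monomial is $S$-free is itself $S$-free.

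\textbf{Step (ii) is unnecessary.} Even in your own setup you do not need it. Your argument for (i) already shows that $(T_1\cdots T_k)^Ng^h\in\langle f_1^h,\ldots,f_s^h\rangle$. This is a nonzero homogeneous element that dehomogenizes to $g$, and that is all the descent in (iii) uses. So you could apply the Gr\"obner hypothesis directly to $\langle f_1^h,\ldots,f_s^h\rangle$ in $k+n$ variables and drop the saturation and elimination entirely.

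\textbf{Connecting the two.} A nonnegative representing matrix is easy to obtain by adding large multiples of earlier rows, which does not change lexicographic comparisons. If you use such a matrix and omit the shift by $m$, your argument becomes essentially the paper's proof. In short, your version buys independence from the normalization step; the paper's buys brevity, fewer variables, and no elimination lemma.
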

	
	The proof of the main theorem relies on the matrix representation of
	rational monomial orders. We therefore recall the relevant
	terminology and the normalization used in our construction.
	A monomial order $\prec$ is called rational if there is a matrix $Q$
	with rational entries such that
	\[
	X^\alpha\prec X^\beta
	\quad\Longleftrightarrow\quad
	Q\alpha<_{\mathrm{lex}}Q\beta.
	\]
	Matrix representations of monomial orders go back to Robbiano
	\cite{Robbiano1985,Robbiano1986}, and the corresponding theory for
	monomial preorders is developed in \cite{KemperTrungAnh2018}.
	Rational monomial orders include the standard lexicographic, graded
	lexicographic, and graded reverse lexicographic orders. Every rational
	monomial order admits a representation by a nonnegative integer
	matrix $M\in\mathbb N^{n\times n}$ of rank $n$
	\cite{KemperYengui2020,GuyotYengui2024}. This is the form used in our
	construction.
	
The proof proceeds by a reduction for a fixed ideal. Let $\prec$ be a
rational monomial order on $A=R[X_1,\ldots,X_n]$, represented by a
matrix $M$, and let $I=\langle f_1,\ldots,f_s\rangle$. Set
$S=R[U_1,\ldots,U_n,X_1,\ldots,X_n]$. The construction is motivated
by the substitution $X^\alpha\mapsto U^{M\alpha}$ used by Guyot and
Yengui to transfer leading terms of individual polynomials
\cite{GuyotYengui2024}.  The present
argument modifies this substitution by retaining the original monomial
and introducing a compatible grading, which allows the reduction to be
carried out at the level of ideals. We define the tagged embedding
$\Psi_M(X^\alpha)=U^{M\alpha}X^\alpha$. Let $H$ be the ideal generated
by $\Psi_M(f_1),\ldots,\Psi_M(f_s)$. The grading $\deg_M(U^pX^\alpha)=p-M\alpha$ makes $H$ a graded ideal.
Let $\chi\colon S\to A$ be the homomorphism that fixes the variables
$X_i$ and sends every $U_j$ to $1$. For every nonzero homogeneous
element $P\in S$, one has
\[
\chi\!\left(\LT_{\prec_{\mathrm{lex}}}(P)\right)
=
\LT_\prec\!\left(\chi(P)\right),
\]
where $\prec_{\mathrm{lex}}$ is the lexicographic order determined by
$U_1\succ\cdots\succ U_n\succ X_1\succ\cdots\succ X_n$. This exact
identity allows a finite generating set of
$\LT_{\prec_{\mathrm{lex}}}(H)$ to descend to a finite generating set
of $\LT_\prec(I)$.
	
A principal source of examples is the theory of valuation rings. The
Gr\"obner ring conjecture can be traced back to Yengui's work on
dynamical Gr\"obner bases \cite{Yengui2006}. For valuation domains,
the one-variable Gr\"obner property is equivalent to having Krull
dimension at most one \cite{LombardiSchusterYengui2012}. Yengui later
proved the corresponding multivariable result for the lexicographic
order \cite{Yengui2014Lex}. More generally, Pola and Yengui proved
that a valuation ring, possibly with zero divisors, is a Gr\"obner
ring if and only if it is coherent and archimedean
\cite{PolaYengui2014}. Combined with our main theorem, these results
characterize the valuation rings for which every finitely generated
polynomial ideal has a finitely generated leading term ideal with
respect to every rational monomial order. In the domain case, this
yields the updated version of the Gr\"obner ring conjecture formulated
in \cite{Yengui2021Counterexample}.
	
	However, Lombardi, Neuwirth, and Yengui cite an unpublished 2024
	preprint by Yengui entitled \emph{A solution to the Gr\"obner ring
		conjecture} and describe it as a recent solution
	\cite{LombardiNeuwirthYengui2026}. Since this preprint was not
	available to us, we have not been able to compare the two approaches.
	Accordingly, the present paper does not claim to provide the first
	solution of the updated Gr\"obner ring conjecture. Its main
	contribution is the general reduction theorem over arbitrary
	commutative coefficient rings.
	
	The finite generation property also has a direct computational
	meaning. Over strongly discrete coherent rings, the generalized
	Buchberger algorithm converges precisely when the module of leading
	terms is finitely generated
	\cite{LombardiNeuwirthYengui2026}. Our result is a finiteness
	reduction rather than a complexity statement. It shows that, once the
	lexicographic Gr\"obner ring property is known in every finite number
	of variables, termination and finite Gr\"obner bases follow for all
	rational monomial orders.
	
	The remainder of the paper is organized as follows. Section~2
	recalls the required notions concerning leading term ideals,
	Gr\"obner rings, the Leading Terms Ideals Conjecture, matrix
	representations of rational monomial orders, and group gradings.
	Section~3 reduces the finite generation problem for leading term
	ideals under rational monomial orders to the lexicographic case in an
	enlarged polynomial ring. As a consequence, it derives the rational
	monomial order characterization of Gr\"obner rings.
	Section~4 derives the consequences for valuation domains and
	valuation rings. Section~5 concludes the paper.
	
	\section{Preliminaries}\label{sec:preliminaries}
	
	Throughout the paper, $R$ denotes a commutative ring with identity,
	$\mathbb{N}=\{0,1,2,\ldots\}$, and
	$A=R[X_1,\ldots,X_n]$. For
	$\alpha=(\alpha_1,\ldots,\alpha_n)\in\mathbb{N}^n$, we write
	$X^\alpha=X_1^{\alpha_1}\cdots X_n^{\alpha_n}$. The definitions and
	conventions in this section follow the standard treatments of
	Gr\"obner bases over rings and of matrix descriptions of monomial
	orders
	\cite{AdamsLoustaunau1994,Pauer2007,Robbiano1985,Robbiano1986,
		KemperTrungAnh2018,LombardiNeuwirthYengui2026,Yengui2015Book}.
	
	\subsection{Monomial orders and leading term ideals}
	
	\begin{definition}[Monomial order]\label{def:monomial-order}
		A monomial order $\prec$ on $A$ is a total order on the monomials
		$X^\alpha$, with $\alpha\in\mathbb{N}^n$, such that
		\begin{enumerate}
			\item $1\preceq X^\alpha$ for every $\alpha\in\mathbb{N}^n$,
			\item $X^\alpha\prec X^\beta$ implies
			$X^{\alpha+\gamma}\prec X^{\beta+\gamma}$ for every
			$\gamma\in\mathbb{N}^n$,
			\item every nonempty set of monomials has a least element.
		\end{enumerate}
	\end{definition}
	
	If the variables are ordered as
	$Y_1\succ\cdots\succ Y_r$, the corresponding lexicographic order
	compares exponent vectors from left to right. Thus
	$Y^a\prec_{\mathrm{lex}}Y^b$ when, at the first index $i$ for which
	$a_i\neq b_i$, one has $a_i<b_i$.
	
	Throughout the paper, $\prec_{\mathrm{lex}}$ denotes the
	lexicographic order on $R[X_1,\ldots,X_n]$ with variable priority
	$X_1\succ X_2\succ\cdots\succ X_n$. Let $0\neq f=\sum_{\alpha}c_\alpha X^\alpha\in A$, where only
	finitely many coefficients $c_\alpha$ are nonzero. If $X^{\alpha_0}$
	is the largest monomial in the support of $f$, we set
	\[
	\LM_\prec(f)=X^{\alpha_0},\qquad
	\LC_\prec(f)=c_{\alpha_0},\qquad
	\LT_\prec(f)=c_{\alpha_0}X^{\alpha_0}.
	\]
	The coefficient is part of the leading term. This distinction is
	essential over a general coefficient ring.
	
	\begin{definition}[Leading term ideal and Gr\"obner basis]
		\label{def:leading-term-ideal}
		For an ideal $I\subseteq A$, its leading term ideal with respect to
		$\prec$ is
		\[
		\LT_\prec(I)
		=
		\bigl\langle \LT_\prec(f)\mid 0\neq f\in I\bigr\rangle.
		\]
		A finite set $G\subseteq I\setminus\{0\}$ is called a Gr\"obner
		basis of $I$ with respect to $\prec$ if
		\[
		\LT_\prec(I)
		=
		\bigl\langle \LT_\prec(g)\mid g\in G\bigr\rangle.
		\]
	\end{definition}
	
	\begin{proposition}[Finite Gr\"obner bases and leading term ideals]
		\label{prop:finite-grobner-basis}
		Let $I\subseteq A$ be a finitely generated ideal. Then
		$\LT_\prec(I)$ is finitely generated if and only if $I$ admits a
		finite Gr\"obner basis with respect to $\prec$.
	\end{proposition}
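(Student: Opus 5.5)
Both implications follow directly from the definitions, so the plan is to verify each in turn, using only Definition~\ref{def:leading-term-ideal}.

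\emph{Finite Gr\"obner basis implies finite generation.} Suppose $I$ admits a finite Gr\"obner basis $G$ with respect to $\prec$. Definition~\ref{def:leading-term-ideal} gives
\[
\LT_\prec(I)=\bigl\langle \LT_\prec(g)\mid g\in G\bigr\rangle .
\]
Since $G$ is finite, this is a finite generating set. This direction does not even use that $I$ is finitely generated.

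\emph{Finite generation implies a finite Gr\"obner basis.} Suppose $\LT_\prec(I)=\langle h_1,\ldots,h_m\rangle$ for some $h_1,\ldots,h_m\in A$. The generators $h_i$ need not themselves be leading terms, so we pass to a generating set made of leading terms. By definition, $\LT_\prec(I)$ is generated by the set $T=\{\LT_\prec(f)\mid 0\neq f\in I\}$. Hence each $h_i$ is an $A$-linear combination of finitely many elements of $T$. Collecting these over all $i$ yields a finite set $F\subseteq I\setminus\{0\}$ such that
\[
h_i\in\bigl\langle \LT_\prec(f)\mid f\in F\bigr\rangle \quad\text{for } i=1,\ldots,m .
\]
We then have the chain of inclusions
\[
\LT_\prec(I)=\langle h_1,\ldots,h_m\rangle\subseteq\bigl\langle \LT_\prec(f)\mid f\in F\bigr\rangle\subseteq\LT_\prec(I),
\]
so equality holds throughout. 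Therefore $F$ is a finite Gr\"obner basis of $I$.

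The only point needing care is this passage from arbitrary generators to generators of the form $\LT_\prec(f)$. It is the standard fact that if a finitely generated module is generated by some set, then it is generated by a finite subset of that set. The hypothesis that $I$ is finitely generated is not needed for the equivalence itself. It only fixes the context in which the notion is used in the rest of the paper.
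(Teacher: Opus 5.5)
Your proof is correct. The paper gives no argument of its own and only cites the equivalence as elementary and standard, so your direct verification from Definition~\ref{def:leading-term-ideal} is essentially that standard argument. This includes replacing arbitrary generators of $\LT_\prec(I)$ by finitely many leading terms, and your remark that finite generation of $I$ plays no role under the paper's definition is accurate; the paper uses it only at the end of the proof of Theorem~\ref{thm:fixed-ideal-reduction}, where it adjoins $f_1,\ldots,f_s$ so that the basis visibly generates $I$.
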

	
	This elementary equivalence is standard in Gr\"obner basis theory
	over rings. See
	\cite{AdamsLoustaunau1994,Pauer2007,LombardiNeuwirthYengui2026}.
	
	\subsection{Gr\"obner rings and the Leading Terms Ideals Conjecture}
	
	We first recall the ring theoretic terminology used in the main
	statement. The following definition is the lexicographic convention
	adopted in \cite[Definition~215]{Yengui2015Book}. Although it is
	introduced there in the setting of strongly discrete coherent rings,
	the underlying finiteness property is meaningful for every
	commutative ring.
	
	\begin{definition}[Gr\"obner rings]
		\label{def:grobner-ring-book}
		Let $R$ be a commutative ring and let $n\geq 1$. The ring $R$ is
		called $n$-Gr\"obner if, for every finitely generated ideal
		$I\subseteq R[X_1,\ldots,X_n]$, the leading term ideal
		$\LT_{\prec_{\mathrm{lex}}}(I)$ is finitely generated with respect
		to the lexicographic order
		$X_1\succ\cdots\succ X_n$. The ring $R$ is called a Gr\"obner ring
		if it is $n$-Gr\"obner for every $n\geq 1$.
	\end{definition}
	
	Reversing the displayed variable ordering gives an equivalent
	property after renaming the variables. We use
	$X_1\succ\cdots\succ X_n$ throughout the paper. By
	Proposition~\ref{prop:finite-grobner-basis}, the $n$-Gr\"obner
	property is equivalent to the existence of a finite Gr\"obner basis
	with respect to this lexicographic order for every finitely generated
	ideal of $R[X_1,\ldots,X_n]$.
	
	\begin{proposition}
		\label{prop:noetherian-grobner}
		Every Noetherian ring is a Gr\"obner ring.
	\end{proposition}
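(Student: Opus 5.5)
The plan is to argue directly from the Hilbert basis theorem: if $R$ is Noetherian, then every polynomial ring $A=R[X_1,\ldots,X_n]$ is Noetherian. Fix $n\geq 1$ and a finitely generated ideal $I\subseteq A$. By Definition~\ref{def:leading-term-ideal}, $\LT_{\prec_{\mathrm{lex}}}(I)$ is an ideal of $A$. In a Noetherian ring every ideal is finitely generated, so $\LT_{\prec_{\mathrm{lex}}}(I)$ is finitely generated. Hence $R$ is $n$-Gr\"obner for every $n$, and so it is a Gr\"obner ring in the sense of Definition~\ref{def:grobner-ring-book}. The argument never uses the specific order, so it works for any monomial order $\prec$.

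For completeness I would also note how to extract a finite Gr\"obner basis, in line with Proposition~\ref{prop:finite-grobner-basis}. Any finite generating set of $\LT_\prec(I)$ can be written using finitely many of the generators $\LT_\prec(f)$ with $f\in I$. To see this, write each element of the finite generating set as a finite $A$-linear combination of such leading terms, and collect the finitely many $f$ that appear. These $f$ then form a finite Gr\"obner basis of $I$.

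There is essentially no obstacle here. The only point to check is that the Hilbert basis theorem applies to an arbitrary commutative Noetherian ring $R$, not only to fields. This is the classical statement, and induction on $n$ gives the case of $n$ variables.
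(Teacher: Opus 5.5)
Your proof is correct and matches the paper's argument, which also just invokes the Hilbert basis theorem. Since $R[X_1,\ldots,X_n]$ is Noetherian, the lexicographic leading term ideal, like every ideal, is finitely generated. Your extra remark extracting a finite Gr\"obner basis from a finite generating set is also sound.
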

	
	This follows from the Hilbert basis theorem and is included among the
	basic examples in \cite[Example~216]{Yengui2015Book}. The class of
	Gr\"obner rings is strictly larger than the class of Noetherian
	rings, since non-Noetherian Gr\"obner rings also exist.
	
	Recall that a ring is coherent if every finitely generated ideal is
	finitely presented. It is stably coherent if
	$R[X_1,\ldots,X_m]$ is coherent for every $m\geq 1$. The following
	result shows that the Gr\"obner property imposes a strong coherence
	condition on all finite polynomial extensions.
	
	\begin{proposition}[Stable coherence]
		\label{prop:grobner-stably-coherent}
		Every Gr\"obner ring is stably coherent.
	\end{proposition}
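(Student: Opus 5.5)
To prove Proposition~\ref{prop:grobner-stably-coherent}, we fix $m\geq 1$ and show that $B=R[X_1,\ldots,X_m]$ is coherent. It suffices to show that the syzygy module of every finite family $f_1,\ldots,f_s\in B$ is finitely generated. The standard route goes through intersections: a ring is coherent if and only if (a) the annihilator of every element is finitely generated, and (b) the intersection of any two finitely generated ideals is finitely generated. We will obtain both from the lexicographic Gr\"obner property in more variables, using elimination.

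\emph{Intersections.} Let $I=\langle g_1,\ldots,g_p\rangle$ and $J=\langle h_1,\ldots,h_q\rangle$ be ideals of $B$. In $R[T,X_1,\ldots,X_m]$, with $T\succ X_1\succ\cdots\succ X_m$ lex, consider the finitely generated ideal $K=\langle Tg_i,(1-T)h_j\rangle$. The usual elimination identity gives $K\cap B=I\cap J$, and this holds over any commutative ring: substitute $T=0$ and $T=1$ for one inclusion, and use $f=Tf+(1-T)f$ for the other. Since $R$ is $(m+1)$-Gr\"obner, $\LT(K)$ is finitely generated. By Proposition~\ref{prop:finite-grobner-basis}, $K$ therefore has a finite Gr\"obner basis $G$. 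The key claim is that $G\cap B$ generates $K\cap B$. The argument is division over a ring. If $0\neq f\in K\cap B$, then $\LT(f)$ is an $R$-linear combination of terms $c\,\mathrm{mon}\cdot\LT(g)$ with $g\in G$ and $\LM(g)\mid\LM(f)$. Because the order is lex with $T$ largest, every such $g$ has $T$-free leading monomial and hence lies in $B$. Subtracting the corresponding combination strictly lowers the leading monomial, and well-ordering lets us conclude by induction.

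\emph{Annihilators.} For $f\in B$, consider $\langle f\rangle\cap\langle T\rangle$ inside $R[T,X]$, or more directly $K'=\langle Tf,\,1-T\rangle$ versus $\langle f\rangle$. A cleaner approach is to note that $\operatorname{Ann}(f)\cdot f=\langle f\rangle\cap\langle 0\rangle$, which does not help directly. We would instead use the following fact: the syzygies of $(f_1,\ldots,f_s)$ are determined by iterated intersections $\langle f_1,\ldots,f_{k-1}\rangle\cap\langle f_k\rangle$ together with $\operatorname{Ann}(f_k)$. For the annihilator, we would find a finite generating set via the Gr\"obner property in one more variable, using the ideal $\langle Tf\rangle+\langle\ldots\rangle$, or reduce to intersections by working in $B[Y]$ with $\langle Yf\rangle\cap\langle f\rangle$-type tricks. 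Specifically, $\operatorname{Ann}(f)$ is the kernel of $B\to B$, $a\mapsto af$, and it can be read off from the elimination ideal $\langle Y-f\rangle\cap\ldots$.

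\emph{Main obstacle.} This annihilator step is where we expect the difficulty to lie. The intersection trick produces $\langle f\rangle\cap\langle g\rangle$ but not colon ideals directly. The first thing to try is the Gr\"obner basis of $\langle f\rangle$ itself in $B$, viewed as a submodule of a free module via a position-over-term extension. We would encode $B^2$ as $\langle Z\rangle$-graded pieces of $B[Z]$ with lex $Z$ largest, and compute the Gr\"obner basis of $\langle Zf\rangle\cap\ldots$. Alternatively, we would appeal to the standard result for strongly discrete rings, namely that finiteness of lex leading term ideals in all numbers of variables yields finite syzygies via Schreyer-type S-polynomial relations (cf.~\cite{Yengui2015Book,LombardiNeuwirthYengui2026}). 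The $1$-variable annihilators of coefficients come from the Gr\"obner property applied to ideals such as $\langle aX,\,bX\rangle$ style constructions.
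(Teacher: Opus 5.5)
Your intersection step is sound. The identity $K\cap B=I\cap J$ holds over any commutative ring, and so does your lex elimination argument: the leading term of $f\in K\cap B$ is an $R$-combination of monomial multiples of $\LT(g)$ with $\LM(g)\mid\LM(f)$, and a $T$-free leading monomial forces $g\in B$.

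The gap is the other half of the criterion you invoke. You never show that $\operatorname{Ann}_B(f)$ is finitely generated, as you acknowledge yourself. This is not a routine detail, because intersections cannot supply it. From $I\cap\langle f\rangle=\langle a_1f,\ldots,a_kf\rangle$ you only recover $(I:f)=\langle a_1,\ldots,a_k\rangle+(0:f)$, so the annihilator is exactly what the intersection trick loses. None of your sketched alternatives is carried out, and they do not work as stated:
\begin{itemize}
\item The literal ideal $\langle aX,bX\rangle$ has leading term ideal $X\langle a,b\rangle R[X]$, so it carries no information.
\item Without a homogenizing variable, $\langle Y-f\rangle$ is just the kernel of $Y\mapsto f$ and says nothing about $\operatorname{Ann}_B(f)$.
\item Schreyer-type syzygy theorems over a ring take the syzygies of leading coefficients in $R$ as input, i.e.\ coherence of $R$. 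That is not assumed here, since the statement is over an arbitrary commutative ring, and would itself have to be derived from the Gr\"obner property.
\end{itemize}
Invoking ``the standard result'' amounts to citing the proposition. That is in fact what the paper does: it gives no proof and refers to \cite{PolaYengui2014} and \cite[Proposition~224]{Yengui2015Book}.

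The missing idea is to homogenize your hint $\langle Y-f\rangle$ with one more variable; this handles all syzygies at once.
\begin{itemize}
\item Given $f_1,\ldots,f_s\in B$, put $L=\langle Y_1-Zf_1,\ldots,Y_s-Zf_s\rangle\subseteq B[Z,Y_1,\ldots,Y_s]$, with the lex order $Z\succ Y_1\succ\cdots\succ Y_s\succ X_1\succ\cdots\succ X_m$.
\item The $B[Z]$-algebra map $\phi\colon B[Z,Y]\to B[Z]$, $Y_i\mapsto Zf_i$, kills $L$, and $F-\phi(F)\in L$ for every $F$. Hence $E=L\cap B[Y]$ is the kernel of $\phi$ restricted to $B[Y]$.
\item Your elimination argument, with $Z$ in the role of $T$ and using that $R$ is $(m+s+1)$-Gr\"obner, shows that $E$ is finitely generated.
\item Since $\phi$ sends polynomials of $Y$-degree $d$ into $BZ^d$, the ideal $E$ is graded for the $Y$-degree. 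Moreover $E_0=0$, and $E_1=\{\sum_i a_iY_i : a_i\in B,\ \sum_i a_if_i=0\}$ is exactly the syzygy module of $(f_1,\ldots,f_s)$.
\item The homogeneous components of finitely many generators of $E$ lie in $E$ and generate it. All have degree at least $1$, so $E_1$ is the $B$-span of those of degree exactly $1$.
\end{itemize}
This proves coherence of $B$ directly, with the case $s=1$ giving $\operatorname{Ann}_B(f)$. It also makes both your intersection step and the annihilator-plus-intersection criterion unnecessary.
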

	
	This result is proved in \cite{PolaYengui2014} and is also recorded
	in \cite[Proposition~224]{Yengui2015Book}. It is particularly
	relevant when the Gr\"obner property is compared with the
	$1$-Gr\"obner property. The latter controls leading term ideals in
	one variable but does not by itself guarantee coherence.
	
	Valuation rings provide a natural setting in which this distinction
	can be made explicit. We allow valuation rings to have zero
	divisors.
	
	\begin{definition}[Valuation rings and the archimedean property]
		\label{def:valuation-ring-archimedean}
		A commutative ring $V$ is called a valuation ring if any two
		elements of $V$ are comparable under divisibility. A valuation
		ring without zero divisors is called a valuation domain.
		
		A valuation ring $V$ is called archimedean if, for every
		$a,b\in\operatorname{Rad}(V)\setminus\{0\}$, there exists
		$k\geq 1$ such that $a$ divides $b^k$.
	\end{definition}
	
	For a valuation domain, the archimedean property is equivalent to
	having Krull dimension at most one. If a valuation ring contains a
	nonzero zero divisor, then it is archimedean if and only if it has
	Krull dimension zero. See
	\cite[Propositions~264 and~265]{Yengui2015Book}.
	
	\begin{proposition}[Valuation domains]
		\label{prop:valuation-domain-grobner}
		Let $V$ be a valuation domain. The following conditions are
		equivalent.
		\begin{enumerate}
			\item The ring $V$ is $1$-Gr\"obner.
			\item The ring $V$ is a Gr\"obner ring.
			\item The ring $V$ is archimedean.
			\item The Krull dimension of $V$ is at most one.
		\end{enumerate}
	\end{proposition}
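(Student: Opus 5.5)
This proposition is essentially a compilation of known results, so the plan is to cite them and close the cycle of implications rather than prove anything new. The implication (2)$\Rightarrow$(1) is immediate from Definition~\ref{def:grobner-ring-book}, since a Gr\"obner ring is in particular $1$-Gr\"obner. The equivalence (3)$\Leftrightarrow$(4) is the standard fact, recalled just before this proposition, that for a valuation domain the archimedean property is equivalent to Krull dimension at most one \cite[Propositions~264 and~265]{Yengui2015Book}. This leaves two links: (1)$\Rightarrow$(4) and (3)$\Rightarrow$(2).

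For (1)$\Rightarrow$(4) we invoke the one-variable theorem of Lombardi, Schuster and Yengui \cite{LombardiSchusterYengui2012}: a valuation domain is $1$-Gr\"obner exactly when its Krull dimension is at most one. To make the direction we need plausible, we would recall the mechanism behind it. Suppose $\dim V\geq 2$. Then there are nonzero nonunits $a,b\in V$ such that no power $b^k$ lies in $aV$.

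We then consider a two-generated ideal of $V[X]$ of the form $\langle aX,\ bX+1\rangle$ (or a close variant). Repeated reductions produce leading coefficients forming an infinite strictly ascending chain of ideals, roughly $a, a/b, a/b^2,\ldots$ inside $V$. This chain is what prevents $\LT(I)$ from being finitely generated. Checking these chain computations carefully is the only delicate point, but since the result is published, we simply cite it.

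For (3)$\Rightarrow$(2) we cite the multivariable lexicographic result. This is either Yengui's theorem \cite{Yengui2014Lex} for valuation domains of dimension at most one, or the more general Pola--Yengui theorem \cite{PolaYengui2014}: a valuation ring is Gr\"obner if and only if it is coherent and archimedean. In the latter case we only need that a valuation domain is coherent. This holds because finitely generated ideals of a valuation domain are principal, and a principal ideal of a domain is free, hence finitely presented.

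One subtlety has to be checked: whether the cited papers use the same lexicographic convention as Definition~\ref{def:grobner-ring-book}. In particular, \cite{PolaYengui2014} uses an existential elimination-order formulation. The safest path is to cite \cite{Yengui2014Lex}, which is stated directly for the lexicographic order, and to use the remark after Definition~\ref{def:grobner-ring-book} that reversing the variable order changes nothing. I expect this convention check to be the main obstacle. The mathematical content is entirely in the cited works.
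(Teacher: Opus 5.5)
Your proposal is correct and follows the paper's route: the paper also proves this proposition only by citation, using \cite{LombardiSchusterYengui2012} for the one-variable/dimension equivalence, \cite{Yengui2014Lex} (after renaming the variables) for the multivariable lexicographic classification, and \cite[Propositions~264 and~265]{Yengui2015Book} for archimedean $\Leftrightarrow$ $\dim V\leq 1$. Your heuristic for (1)$\Rightarrow$(4) is also sound. By induction on $k$, with base case $a=a(bX+1)-b(aX)$, the ideal $\langle aX,bX+1\rangle$ contains $a/b^k=(a/b^k)(bX+1)-X\cdot(a/b^{k-1})$ for every $k$. Hence its constant part is the non-finitely-generated union $\bigcup_k (a/b^k)V$.
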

	
	The equivalence between the one variable condition and the dimension
	condition follows from
	\cite{LombardiSchusterYengui2012}. The multivariable lexicographic
	classification is proved in \cite{Yengui2014Lex}. Thus the
	$1$-Gr\"obner and Gr\"obner properties are equivalent for valuation
	domains.
	
	For valuation rings with zero divisors, coherence is an additional
	and essential condition.
	
	\begin{proposition}[Valuation rings]
		\label{prop:valuation-ring-grobner}
		Let $V$ be a valuation ring, possibly with zero divisors.
		\begin{enumerate}
			\item The ring $V$ is $1$-Gr\"obner if and only if it is
			archimedean.
			\item The ring $V$ is a Gr\"obner ring if and only if it is
			coherent and archimedean.
		\end{enumerate}
		Consequently, if $V$ is coherent, then $V$ is $1$-Gr\"obner if
		and only if it is a Gr\"obner ring.
	\end{proposition}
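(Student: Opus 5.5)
The statement of Proposition~\ref{prop:valuation-ring-grobner} is essentially a compilation of known results, so the plan is to assemble it from the literature together with the stable coherence result, rather than to build anything new.

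\emph{Part (1).} The $1$-Gr\"obner property for valuation rings, possibly with zero divisors, is characterized by archimedeanity in the work of Pola and Yengui \cite{PolaYengui2014}, building on \cite{LombardiSchusterYengui2012} for domains. I would recall the mechanism only in outline. In one variable over a valuation ring, the leading term ideal of a finitely generated ideal is generated by monomials $c\,X^k$, where the coefficients $c$ form an ascending chain of principal ideals of $V$ indexed by $k$. Finite generation amounts to stabilization of this chain. The archimedean condition, that for $a,b\in\operatorname{Rad}(V)\setminus\{0\}$ some power $b^k$ is divisible by $a$, forces this stabilization. Conversely, if $a\nmid b^k$ for every $k$, the ideal $\langle aX-b\rangle$ (or $\langle a, X-b\rangle$ suitably adapted) produces the strictly increasing chain $\langle a\rangle\subsetneq\langle a,b\rangle$-type coefficients $b^k$, so the leading term ideal is not finitely generated.

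\emph{Part (2).} The forward direction follows from two facts. A Gr\"obner ring is $1$-Gr\"obner, hence archimedean by part~(1). It is also stably coherent by Proposition~\ref{prop:grobner-stably-coherent}, and in particular coherent. The converse, that a coherent archimedean valuation ring is Gr\"obner in every number of variables for the lexicographic order, is the substantive theorem of \cite{PolaYengui2014}, extending \cite{Yengui2014Lex}. I would cite it rather than reprove it.

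The step I would expect to be hardest if it had to be proved is this converse: showing that lexicographic leading term ideals in $n$ variables are finitely generated. The natural approach is induction on $n$ via a dynamical (case-splitting on divisibility and nilpotence) Buchberger algorithm. Coherence is used to control syzygies of coefficients, that is, annihilators of zero divisors, and archimedeanity is used to guarantee termination.

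\emph{Final assertion.} This is immediate. If $V$ is coherent, then by part~(2) the Gr\"obner property is equivalent to archimedeanity, which by part~(1) is equivalent to the $1$-Gr\"obner property.
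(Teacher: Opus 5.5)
Your assembly has the same shape as the paper's proof: the paper proves this proposition entirely by citation and treats the final assertion as immediate. You derive the forward half of (2) from (1) together with Proposition~\ref{prop:grobner-stably-coherent}, rather than citing the classification for both directions; that is a harmless variant. The weak point is part (1). The paper takes (1), for valuation rings possibly with zero divisors, from Li, Liu, and Zheng \cite{LiLiuZheng2017}, not from Pola and Yengui. The result it uses from Pola and Yengui is the classification in (2). The direction ``archimedean $\Rightarrow$ $1$-Gr\"obner'' must hold with no coherence hypothesis, so it cannot be read off from the coherent-and-archimedean classification. Your outline only asserts it (``forces this stabilization'') without proving it. As written, that half of (1) rests on a citation to a source the paper does not use for it, plus an unproved claim.

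The outline also contains concrete errors you should not carry over.
\begin{itemize}
\item The ideals $J_k=\{c\in V: cX^k\in\LT(I)\}$ do form an ascending chain, but they need not be principal. Finite generation of $\LT(I)$ therefore requires each $J_k$ to be finitely generated, not just stabilization. In the example below, it is already $J_0$ that is not finitely generated.
\item Neither of your test ideals works. Over a valuation domain $\LT(gf)=\LT(g)\LT(f)$, so $\LT(\langle aX-b\rangle)=\langle aX\rangle$. Since $X-b$ is monic, $\LT(\langle a,X-b\rangle)=\langle a,X\rangle$.
\end{itemize}
A correct choice is $I=\langle a,bX-1\rangle$ with $a,b\in\operatorname{Rad}(V)\setminus\{0\}$ and $a\nmid b^k$ for all $k$. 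Using $V[X]/\langle bX-1\rangle\cong V[1/b]$, one finds $I\cap V=\bigcup_k(aV:b^k)$. For a valuation domain this equals $\bigcup_k (a/b^k)V$, a strictly increasing union because $b$ is a nonunit, hence not finitely generated. Since $\LT(I)\cap V=I\cap V$, and a finitely generated term ideal is generated by finitely many terms (whose constant members would then generate $I\cap V$), $\LT(I)$ is not finitely generated.
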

	
	The first assertion is proved in \cite{LiLiuZheng2017}, while the
	second is the classification obtained in
	\cite{PolaYengui2014}. In particular, if $V$ contains a nonzero zero
	divisor, then $V$ is $1$-Gr\"obner if and only if it has Krull
	dimension zero, whereas it is Gr\"obner if and only if it has Krull
	dimension zero and is coherent.
	
	The coherence assumption cannot be removed. Li, Liu, and Zheng
	construct a valuation ring that is $1$-Gr\"obner but not coherent
	\cite{LiLiuZheng2017}. By
	Proposition~\ref{prop:valuation-ring-grobner}, this ring is not
	Gr\"obner. Hence, without coherence, the implication
	\[
	1\text{-Gr\"obner}
	\quad\Longrightarrow\quad
	\text{Gr\"obner}
	\]
	is false even within the class of valuation rings.

The preceding results clarify the passage from the $1$-Gr\"obner
property to the Gr\"obner property. They show that this passage holds
for valuation domains and for coherent valuation rings, but can fail
without coherence when zero divisors are present. The dependence of
leading term finiteness on the chosen monomial order is a separate
question. These two problems are brought together in the Leading Terms
Ideals Conjecture. In the terminology of \cite{Yengui2015Book}, a ring
is strongly discrete if membership in finitely generated ideals is
decidable.

We now recall the original formulation of the conjecture.
	
	\begin{conjecture}[Leading Terms Ideals Conjecture]
		\label{conj:leading-terms-ideals}
		Let $R$ be a strongly discrete coherent ring. The following
		conditions were conjectured to be equivalent.
		\begin{enumerate}
			\item[(1)] For every $n\geq 1$, every monomial order $\prec$
			on $R[X_1,\ldots,X_n]$, and every finitely generated ideal
			$I\subseteq R[X_1,\ldots,X_n]$, the ideal $\LT_\prec(I)$ is
			finitely generated.
			\item[(2)] The ring $R$ is a Gr\"obner ring.
			\item[(3)] The ring $R$ is $1$-Gr\"obner.
		\end{enumerate}
	\end{conjecture}
	
	This is Conjecture~225 of \cite{Yengui2015Book}. Its unrestricted
	order independence assertion is false. Yengui constructed a
	valuation domain of Krull dimension one and a finitely generated
	polynomial ideal whose leading term ideal is not finitely generated
	with respect to a suitable irrational monomial order
	\cite{Yengui2021Counterexample}. We therefore replace arbitrary
	monomial orders in assertion~{\rm(1)} by rational monomial orders (see Conjecture~\ref{conj:updated-leading-terms}).

	The main theorem of this paper proves the equivalence of
	assertions~{\rm(1)} and~{\rm(2)} in
	Conjecture~\ref{conj:updated-leading-terms}. In fact, this
	equivalence holds over every commutative ring, without discreteness
	or coherence assumptions. Since every Gr\"obner ring is
	$1$-Gr\"obner by definition, the only remaining implication in the
	updated conjecture is
	\[
	R\text{ is }1\text{-Gr\"obner}
	\quad\Longrightarrow\quad
	R\text{ is Gr\"obner}.
	\]
	This implication holds for valuation domains and for coherent
	valuation rings by
	Propositions~\ref{prop:valuation-domain-grobner}
	and~\ref{prop:valuation-ring-grobner}. The example in
	\cite{LiLiuZheng2017} shows that coherence is indispensable if zero
	divisors are allowed.
	
	\subsection{Matrix representations of monomial orders}
	
	For $u,v\in\mathbb{R}^m$, we write
	$u<_{\mathrm{lex}}v$ if the first nonzero coordinate of $v-u$ is
	positive. Given a matrix $B\in\mathbb{R}^{m\times n}$, define a
	relation on the monomials of $R[X_1,\ldots,X_n]$ by
	\[
	X^\alpha\prec_B X^\beta
	\quad\Longleftrightarrow\quad
	B\alpha<_{\mathrm{lex}}B\beta.
	\]
	We say that $B$ represents a monomial order $\prec$ if
	$\prec=\prec_B$.
	
	\begin{theorem}[Matrix representation of monomial orders]
		\label{thm:matrix-representation}
		Every monomial order in finitely many variables over $R$ is
		represented by a finite real matrix.
	\end{theorem}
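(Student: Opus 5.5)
We prove Robbiano's theorem by the classical route: extend the order to a total order on $\mathbb{Q}^n$ (or $\mathbb{R}^n$) compatible with addition, then realize it through finitely many linear functionals by induction on the dimension.

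\textbf{Step 1: extension to $\mathbb{Q}^n$.} The order $\prec$ on monomials gives a total order on $\mathbb{N}^n$ that is compatible with addition. We extend it to the group $\mathbb{Z}^n$ by declaring $\alpha-\beta>0$ if and only if $X^\beta\prec X^\alpha$. This is well defined because of cancellation, which follows from the translation property together with totality. We then extend to $\mathbb{Q}^n$ by declaring $v>0$ if and only if $kv>0$ for some positive integer $k$ with $kv\in\mathbb{Z}^n$. This gives a total order on the $\mathbb{Q}$-vector space $\mathbb{Q}^n$ compatible with addition and with multiplication by positive rationals. Let $P$ be its positive cone.

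\textbf{Step 2: the first row.} Let $C$ be the closure in $\mathbb{R}^n$ of the positive cone $P$. We claim $C\neq\mathbb{R}^n$. Indeed, $P\cap(-P)=\emptyset$, and $P$ is convex in $\mathbb{Q}^n$, so $C$ is a closed convex cone. If $C$ were all of $\mathbb{R}^n$, then $P$ would be dense, and so would $-P$. Since $P$ is convex, its real convex hull would then contain an open set, and hence a rational point of $-P$, which is a contradiction. Because $C\neq\mathbb{R}^n$, separation gives a nonzero $w_1\in\mathbb{R}^n$ with $w_1\cdot v\ge 0$ for all $v\in C$. Consequently, $w_1\cdot v>0$ forces $v>0$: if instead $v<0$, then $-v\in P$ and $w_1\cdot(-v)\ge0$, a contradiction.

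\textbf{Step 3: induction.} Let $W=\{v\in\mathbb{Q}^n : w_1\cdot v=0\}$. This is a $\mathbb{Q}$-subspace. We replace it by its $\mathbb{Q}$-span, which is still $W$, of dimension strictly less than $n$ provided $w_1\cdot v\neq0$ for some rational $v$; this holds because $w_1\neq 0$. The restriction of the order to $W$ is again a compatible total order. Applying Step 2 inside $W\otimes\mathbb{R}$, and then extending the resulting functional to $\mathbb{R}^n$, gives $w_2$. We repeat this. Since the dimension drops at each step, the process terminates after at most $n$ steps, once the remaining subspace is $0$. The matrix $B$ with rows $w_1,w_2,\dots$ then satisfies: $v>0$ if and only if $Bv>_{\mathrm{lex}}0$. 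Hence $\prec=\prec_B$. The well-ordering axiom is not even needed here, although it is consistent with the conclusion, since it forces nonnegativity of the first nonzero entry in each column.

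\textbf{Main obstacle.} The main obstacle is Step 2, showing that the closure of the positive cone is proper so that a separating hyperplane exists. This is the place where the rationality and density argument must be made carefully. The dimension bookkeeping in Step 3 also needs attention, specifically that $\ker w_1\cap\mathbb{Q}^n$ is a proper subspace. Alternatively, since the statement is classical, one could simply cite Robbiano \cite{Robbiano1985,Robbiano1986}, and the authors likely do so.
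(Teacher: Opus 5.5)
The paper does not prove this theorem; it only cites Robbiano \cite{Robbiano1985,Robbiano1986}. Your proposal is a self-contained version of the standard argument: order $\mathbb{Q}^n$, take a supporting hyperplane of the closure of the positive cone, and recurse on $\ker w_1\cap\mathbb{Q}^n$. Its structure is correct. The bookkeeping in Step 3 works: for $0\neq v\in\mathbb{Q}^n$ there is a first index $j$ with $w_j\cdot v\neq 0$, because the nested rational subspaces end at $0$. Step 2 applied in the $(j-1)$-st subspace then gives $\operatorname{sign}(w_j\cdot v)=\operatorname{sign}(v)$. The argument also yields slightly more than the statement: at most $n$ rows, and no use of the well-ordering axiom.

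The step you yourself flagged as delicate is not finished. A rational point $q\in -P$ lying in the real convex hull of $P$ is not yet a contradiction. You still have to show $q\in P$, that is, that $P$ contains every rational point of its real convex hull. Convexity of $P$ over $\mathbb{Q}$ gives this only once you know the relevant convex coefficients can be taken rational, and this is exactly where rationality enters.

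A clean fix avoids $-P$ altogether. If $P$ were dense, choose affinely independent $p_0,\dots,p_n\in P$ close to the vertices of a simplex centred at $0$, so that $0$ lies in the interior of their convex hull. The barycentric coordinates $\lambda_i$ of $0$ are the unique solution of the nonsingular system $\sum\lambda_i p_i=0$, $\sum\lambda_i=1$. This system has rational data, so the $\lambda_i$ are rational, and they are positive because $0$ is interior. Hence $0=\sum\lambda_i p_i\in P$, which is absurd.

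In Step 3, transport this argument to $W$ through a rational basis, identifying $W\cong\mathbb{Q}^k$ inside $W\otimes\mathbb{R}\cong\mathbb{R}^k$. You correctly work in $W\otimes\mathbb{R}$ rather than in the real hyperplane $w_1^{\perp}$, and this matters: the two can differ when $w_1$ is irrational. With these additions your proof is complete.
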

	
	The monomial order case is due to Robbiano
	\cite{Robbiano1985,Robbiano1986}. The corresponding representation
	theorem for monomial preorders is developed in
	\cite{KemperTrungAnh2018}.
	
	\begin{definition}[Rational monomial order]
		\label{def:rational-order}
		A monomial order is called \emph{rational} if it admits a matrix
		representation with rational entries.
	\end{definition}
	
	\begin{proposition}[Nonnegative integral matrix criterion]
		\label{prop:nonnegative-matrix-order}
		Let $M\in\mathbb{N}^{m\times n}$. The relation
		\[
		X^\alpha\prec_M X^\beta
		\quad\Longleftrightarrow\quad
		M\alpha<_{\mathrm{lex}}M\beta
		\]
		defines a monomial order on $R[X_1,\ldots,X_n]$ if and only if
		$\rank(M)=n$.
	\end{proposition}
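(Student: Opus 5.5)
The plan is to check the three axioms of Definition~\ref{def:monomial-order} directly, together with the requirement that $\prec_M$ be a total order. The key observation is that the rank condition is equivalent to injectivity of the map $\alpha\mapsto M\alpha$ on $\mathbb{N}^n$. Since $<_{\mathrm{lex}}$ is a strict total order on $\mathbb{R}^m$, the relation $\prec_M$ is irreflexive and transitive automatically. It is total on monomials exactly when $M\alpha\neq M\beta$ for all $\alpha\neq\beta$ in $\mathbb{N}^n$.

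First I will prove the equivalence
\[
\rank(M)=n
\quad\Longleftrightarrow\quad
\alpha\mapsto M\alpha \text{ is injective on } \mathbb{N}^n .
\]
If $\rank(M)=n$, then $\ker M=0$ over $\mathbb{Q}$, so injectivity is clear. Conversely, suppose $\rank(M)<n$. Then there is a nonzero rational vector $v$ with $Mv=0$, and after clearing denominators we may take $v\in\mathbb{Z}^n$. Write $v=v^+-v^-$ with $v^\pm\in\mathbb{N}^n$ the positive and negative parts. Then $v^+\neq v^-$ but $Mv^+=Mv^-$. In that case neither $X^{v^+}\prec_M X^{v^-}$ nor the reverse holds, so $\prec_M$ is not a total order. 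This settles necessity, and it also gives totality in the sufficiency direction.

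For sufficiency, assume $\rank(M)=n$. The compatibility axiom follows from linearity: $M(\alpha+\gamma)-M(\beta+\gamma)=M\alpha-M\beta$, and $<_{\mathrm{lex}}$ depends only on this difference. For the axiom $1\preceq X^\alpha$, note that nonnegativity of $M$ gives $M\alpha\in\mathbb{N}^m$. Hence either $M\alpha=0$, which forces $\alpha=0$ by injectivity, or the first nonzero coordinate of $M\alpha$ is positive. In both cases $M0\leq_{\mathrm{lex}}M\alpha$.

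The remaining point, and the only slightly nontrivial one, is the well-ordering axiom. I plan to show that $(\mathbb{N}^m,<_{\mathrm{lex}})$ is well-ordered, by induction on $m$. Given a nonempty subset, take the least possible first coordinate, then apply the induction hypothesis to the last $m-1$ coordinates of those elements attaining it. Since $\alpha\mapsto M\alpha$ is an order embedding of the monomials into $\mathbb{N}^m$, any nonempty set of monomials has a least element, namely the monomial whose image is the least element of its image set. As an alternative, one could invoke Dickson's lemma: it shows that any total order compatible with multiplication and satisfying $1\preceq X^\alpha$ is a well-order. The direct argument above avoids that dependence, and here nonnegativity of $M$ is used essentially.
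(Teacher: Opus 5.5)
Your proof is correct and complete. The paper gives no argument of its own for this proposition. It records the statement as the full-rank case of the matrix criterion in \cite{KemperYengui2020,GuyotYengui2024}, so your direct check of the axioms in Definition~\ref{def:monomial-order} is a different, self-contained route.

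Its main benefit is that it shows exactly where each hypothesis on $M$ is used. Integrality enters only in the necessity direction, where a rank deficiency becomes an integer kernel vector $v=v^+-v^-$ and hence two distinct monomials with $Mv^+=Mv^-$. This step really does need rational entries. The nonnegative real matrix $M=(1,\sqrt{2})\in\mathbb{R}^{1\times 2}$ has rank $1<2$, yet $\alpha\mapsto M\alpha$ is injective on $\mathbb{N}^2$ and $\prec_M$ is a monomial order. Nonnegativity enters in the axiom $1\preceq X^\alpha$, which already fails for $M=(-1)$. It also enters in the well-ordering, which you get from the well-ordering of $(\mathbb{N}^m,<_{\mathrm{lex}})$ rather than from Dickson's lemma.

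The paper's citation route treats the statement as a special case of a more general criterion and leaves the reader to extract that case. Since the paper only uses nonnegative integral matrices of full rank, your elementary argument covers everything it needs.
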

	
	This is the full rank case of the matrix criterion described in
	\cite{KemperYengui2020,GuyotYengui2024}.
	
	\begin{theorem}[Integral normalization of rational monomial orders]
		\label{thm:integral-normalization}
		Every rational monomial order on $R[X_1,\ldots,X_n]$ is represented
		by a matrix $M\in\mathbb{N}^{n\times n}$ of rank $n$.
	\end{theorem}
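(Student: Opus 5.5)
Start from a rational matrix $Q\in\mathbb{Q}^{m\times n}$ representing $\prec$, with $X^\alpha\prec X^\beta\iff Q\alpha<_{\mathrm{lex}}Q\beta$. The goal is to transform $Q$, step by step, into a nonnegative integer $n\times n$ matrix of rank $n$ without changing the order. We only use operations that preserve $<_{\mathrm{lex}}$ comparisons on $\mathbb{Z}^n$.

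\textbf{Step 1 (clearing denominators and trimming rows).} First multiply each row of $Q$ by a positive common denominator; this gives an integer matrix representing the same order. Then delete every row that is a rational linear combination of the rows above it. On a difference vector $\beta-\alpha$ at which all earlier rows vanish, such a row also vanishes, so it never decides a comparison. After this we have an integer matrix $B$ with linearly independent rows and $\prec=\prec_B$. Its rank must be $n$. Otherwise there is a nonzero $v\in\mathbb{Z}^n$ with $Bv=0$. Writing $v=\beta-\alpha$ with $\alpha\neq\beta\in\mathbb{N}^n$, the monomials $X^\alpha$ and $X^\beta$ would be incomparable, contradicting totality. So $B$ is $n\times n$ and invertible.

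\textbf{Step 2 (nonnegativity of the first row).} Let $b_1$ be the first row of $B$. I claim $b_1$ has nonnegative entries. Indeed, $1\preceq X_j$ for each $j$ forces $Be_j\geq_{\mathrm{lex}}0$, so its first coordinate satisfies $(b_1)_j\geq 0$. Moreover $b_1\neq 0$, because $B$ is invertible.

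\textbf{Step 3 (making all rows nonnegative).} Replace each row $b_i$ with $i\geq 2$ by $b_i+c_i b_1$ for a large integer $c_i$. The matrix $B'$ obtained this way is related to $B$ by a unimodular lower-triangular row transformation, so $B'$ still has rank $n$. It also still represents $\prec$: if the first $i-1$ coordinates of $Bv$ vanish, then $b_1v=0$ (for $i\ge2$), so $b_i'v=b_iv$. Hence the first nonzero coordinate of $B'v$ equals that of $Bv$.

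The main obstacle is choosing $c_i$ so that $b_i+c_ib_1\geq 0$ entrywise. This fails when $(b_1)_j=0$ while $(b_i)_j<0$ for some $j$. To handle it, I would argue column by column, using the well-ordering of $\prec$ through $X_j\succ 1$. Let $k$ be the first index with $(Be_j)_k\neq 0$. Then $(Be_j)_k>0$, and every row above row $k$ vanishes on column $j$. I would process the rows in order and add large multiples of earlier rows that are already nonnegative. Concretely, replace $b_i$ by $b_i+\sum_{l<i}c_{il}b_l'$, where $b_1',\dots,b_{i-1}'$ are the rows already made nonnegative. For each column $j$ with $k_j<i$, the row $b_{k_j}'$ has a positive $j$-entry, so the negative entry $(b_i)_j$ can be compensated. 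For columns with $k_j\ge i$, the column is zero in rows $1,\dots,i-1$, and then $(b_i)_j\geq 0$ already: either $i=k_j$ and the entry is positive, or $i<k_j$ and the entry is zero. Choosing all the $c_{il}$ large makes row $i$ nonnegative.

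Each such operation adds multiples of earlier rows, so the same argument as above shows it preserves both the order and the rank. By induction on $i$ we obtain $M\in\mathbb{N}^{n\times n}$ of rank $n$ with $\prec=\prec_M$. This is consistent with Proposition~\ref{prop:nonnegative-matrix-order}.
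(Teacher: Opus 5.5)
Your proof is correct. It uses the same three ingredients as the paper's sketch: positive rescaling of rows, the ``no ties'' argument forcing rank $n$, and deletion of rows lying in the span of earlier rows. However, you assemble them in the opposite order, and you actually prove the step the paper only cites.

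The paper first passes to a nonnegative integer matrix by appeal to \cite{KemperYengui2020,GuyotYengui2024}, and only then trims redundant rows. Trimming trivially preserves nonnegativity, since rows are only deleted, so the substantive normalization is left to the literature.

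You trim first, so that $B$ is square and invertible and every column $j$ has a well-defined first nonzero entry $(b_{k_j})_j$, which is positive because $1\prec X_j$. You then clear negative entries by lower unitriangular row operations, adding large nonnegative multiples of rows already made nonnegative. This works because each new row $b'_l$ is a combination of $b_1,\ldots,b_l$. Hence $(b'_{k_j})_j=(b_{k_j})_j>0$, and $(b'_i)_j=(b_i)_j$ whenever $k_j\geq i$. Moreover, the first nonzero coordinate of $B'v$ coincides with that of $Bv$ for every $v$, so the order is unchanged.

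Your route buys a self-contained proof that shows exactly where the axiom $1\preceq X^\alpha$ is used. Your preliminary attempt with $b_1$ alone can simply be dropped, since the column-by-column version supersedes it.
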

	
	The normalization is recorded in
	\cite{KemperYengui2020,GuyotYengui2024}. A rational representing
	matrix may first be replaced by one with nonnegative integer entries.
	Since a monomial order has no ties between distinct monomials, the
	resulting matrix has rank $n$. Reading the rows from top to bottom,
	rows lying in the span of the preceding retained rows are
	lexicographically redundant and may be removed. This leaves exactly
	$n$ independent rows.
	
	\subsection{Group gradings and homogeneous ideals}
	
	We recall the basic terminology concerning group gradings and
	homogeneous ideals. The definitions of a group graded ring,
	homogeneous elements, homogeneous components, and graded submodules
	may be found in
	\cite[Chapter~A, Section~I.1, pp.~1--3]
	{NastasescuVanOystaeyen1982}. All statements in this subsection are
	valid over an arbitrary commutative ring with identity and require
	neither a domain nor a Noetherian hypothesis.
	
	\begin{definition}[Group grading]
		\label{def:group-grading}
		Let $G$ be an abelian group, written additively. A $G$-grading on a
		ring $S$ is a direct sum decomposition
		\[
		S=\bigoplus_{d\in G}S_d
		\]
		into additive subgroups such that $S_dS_e\subseteq S_{d+e}$ for all
		$d,e\in G$. A nonzero element of $S_d$ is called homogeneous of
		degree $d$.
		
		Every element $F\in S$ has a unique expression
		$F=\sum_{d\in G}F_d$, where $F_d\in S_d$ and only finitely many
		$F_d$ are nonzero. The nonzero elements $F_d$ are called the
		homogeneous components of $F$.
	\end{definition}
	
	The following elementary examples fix the conventions used below.
	
	\begin{example}[Standard gradings]
		\label{ex:standard-gradings}
		Let $R$ be a commutative ring.
		\begin{enumerate}
			\item Every ring $S$ has the trivial $G$-grading defined by
			$S_0=S$ and $S_d=0$ for $d\neq 0$.
			
			\item The polynomial ring $R[X_1,\ldots,X_n]$ has the standard
			$\mathbb Z$-grading defined by $\deg(X_i)=1$. Its component of
			degree $d\geq 0$ is the $R$-module generated by the monomials
			$X^\alpha$ with $|\alpha|=d$, and its components of negative
			degree are zero.
			
			\item The same polynomial ring has the fine
			$\mathbb Z^n$-grading defined by $\deg(X_i)=e_i$, where
			$e_1,\ldots,e_n$ is the standard basis of $\mathbb Z^n$. Thus
			$\deg(X^\alpha)=\alpha$.
		\end{enumerate}
	\end{example}
	
	The trivial grading is recorded in
	\cite[Chapter~A, Example~1.1.2(2), p.~2]
	{NastasescuVanOystaeyen1982}. The fine grading of a polynomial ring
	appears in \cite[p.~4]{MillerSturmfels2005}, and the more general
	degree-map description of multigradings is given in
	\cite[Definition~8.1, pp.~149--150]{MillerSturmfels2005}.
	
	The standard and fine gradings are special cases of gradings defined
	by an integer matrix. This is the form that will be used in the
	reduction argument.
	
	\begin{definition}[Matrix grading]
		\label{def:matrix-grading}
		Let $R$ be a commutative ring, let
		$T=R[Y_1,\ldots,Y_r]$, and let
		$W\in\mathbb Z^{k\times r}$. Denote the columns of $W$ by
		$w_1,\ldots,w_r$. Place $R$ in degree zero and assign
		$\deg_W(Y_i)=w_i$. Then, for $a\in\mathbb N^r$, one has
		$\deg_W(Y^a)=Wa$.
		
		For $d\in\mathbb Z^k$, the homogeneous component of degree $d$ is
		\[
		T_d=
		\bigoplus_{\substack{a\in\mathbb N^r\\Wa=d}}RY^a.
		\]
		These components define a $\mathbb Z^k$-grading
		$T=\bigoplus_{d\in\mathbb Z^k}T_d$. A polynomial is called
		$W$-homogeneous if all terms occurring in it have the same
		$W$-degree.
	\end{definition}
	
	For polynomial rings over a field, the construction obtained by
	assigning prescribed degrees to the indeterminates is given in
	\cite[Proposition~4.1.1, pp.~16--17]
	{KreuzerRobbiano2005}, and matrix gradings are introduced explicitly
	in \cite[Definition~4.1.6, p.~18]{KreuzerRobbiano2005}. The same
	construction is valid over an arbitrary commutative coefficient ring.
	The standard $\mathbb Z$-grading corresponds to
	$W=(1,\ldots,1)$, while the fine $\mathbb Z^r$-grading corresponds to
	$W=I_r$.
	
	We shall use ideals that are compatible with a given grading.
	
	\begin{definition}[Graded ideal]
		\label{def:graded-ideal}
		Let $S=\bigoplus_{d\in G}S_d$ be a $G$-graded ring. An ideal
		$H\subseteq S$ is called graded, or homogeneous, if
		$H=\bigoplus_{d\in G}(H\cap S_d)$.
	\end{definition}
	
	The following standard characterization will be used twice: first to
	show that an ideal generated by homogeneous elements is graded, and
	then to replace an element of a graded ideal by one of its homogeneous
	components.
	
	\begin{proposition}[Characterizations of graded ideals]
		\label{prop:graded-ideal-characterization}
		Let $H$ be an ideal of a $G$-graded ring $S$. The following
		conditions are equivalent.
		\begin{enumerate}
			\item The ideal $H$ is graded.
			\item Every homogeneous component of every element of $H$
			belongs to $H$.
			\item The ideal $H$ is generated, as an ideal of $S$, by
			homogeneous elements.
		\end{enumerate}
	\end{proposition}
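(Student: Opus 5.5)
We prove the cycle $(1)\Rightarrow(2)\Rightarrow(3)\Rightarrow(1)$, working directly from Definitions~\ref{def:group-grading} and~\ref{def:graded-ideal}.

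For $(1)\Rightarrow(2)$, let $F\in H$ and write $F=\sum_d F_d$. Then $H=\bigoplus_d(H\cap S_d)$ gives a decomposition $F=\sum_d h_d$ with $h_d\in H\cap S_d$. The decomposition of an element of $S$ into homogeneous components is unique, so $h_d=F_d$. Hence each $F_d$ lies in $H$.

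For $(2)\Rightarrow(3)$, take as generators the set of all homogeneous components of all elements of $H$. By $(2)$ these components lie in $H$, so the ideal they generate is contained in $H$. Conversely, each $F\in H$ is the sum of its own components, so $H$ is contained in that ideal. Thus $H$ is generated by homogeneous elements.

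The step $(3)\Rightarrow(1)$ is the only one needing a computation. Suppose $H$ is generated by homogeneous elements $g_j$ with $\deg g_j=e_j$.

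1. Take $F\in H$ and write $F=\sum_j a_jg_j$ as a finite sum with $a_j\in S$.
2. Expand each coefficient into components, $a_j=\sum_c (a_j)_c$.
3. Substituting gives $F=\sum_{j,c}(a_j)_cg_j$. Each term $(a_j)_cg_j$ lies in $S_{c+e_j}$, by the rule $S_dS_e\subseteq S_{d+e}$, and also lies in $H$.
4. Group the terms by total degree. By uniqueness of the decomposition,
\[
F_d=\sum_{c+e_j=d}(a_j)_cg_j\in H\cap S_d .
\]
5. This gives $H\subseteq\sum_d(H\cap S_d)$. The reverse inclusion is trivial. The sum is direct because $\sum_d S_d$ is already direct in $S$.

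The main point to track carefully is the regrouping in step~4, where uniqueness of homogeneous decomposition identifies $F_d$ with a combination of generators. This argument is valid over any $G$ and any commutative ring. No hypothesis beyond the grading axioms is needed.
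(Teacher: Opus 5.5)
Your proof is correct, and it is the standard argument: the paper gives no proof of its own and cites Nastasescu--Van Oystaeyen and Hazrat. Those sources use the same cycle $(1)\Rightarrow(2)\Rightarrow(3)\Rightarrow(1)$, with uniqueness of the homogeneous decomposition as the key step.
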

	
	The equivalence of the first two conditions is the standard
	characterization of graded submodules
	\cite[Chapter~A, Section~I.1, p.~3]
	{NastasescuVanOystaeyen1982}. The complete equivalence, including the
	characterization by homogeneous ideal generators, is stated explicitly
	in \cite[Section~1.1.5, p.~20]{Hazrat2016}.
	
	Since every monomial is homogeneous for a matrix grading, the preceding
	proposition has the following immediate consequence.
	
	\begin{corollary}
		\label{cor:component-containing-leading-term}
		Let $S=R[Y_1,\ldots,Y_r]$ be equipped with a matrix grading, let
		$H\subseteq S$ be a graded ideal, and let $\prec$ be a monomial
		order on $S$. For $0\neq Q\in H$, let $P$ be the homogeneous
		component of $Q$ containing $\LT_\prec(Q)$. Then $P\in H$ and
		$\LT_\prec(P)=\LT_\prec(Q)$.
	\end{corollary}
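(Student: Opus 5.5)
The statement to prove is Corollary~\ref{cor:component-containing-leading-term}. I will argue directly from Proposition~\ref{prop:graded-ideal-characterization} together with the fact that, under a matrix grading, every monomial $Y^a$ is homogeneous of degree $Wa$.

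First, write $0\neq Q\in H$ as a finite sum of terms $c_aY^a$. Grouping the terms by their $W$-degree gives the decomposition $Q=\sum_{d}Q_d$ into homogeneous components. Each $Q_d$ is the sum of exactly those terms of $Q$ whose monomial has degree $d$. Hence the supports of distinct components are disjoint, and each support is a subset of the support of $Q$. The leading term $\LT_\prec(Q)=c\,Y^{a_0}$ therefore lies in exactly one component, namely $P=Q_{Wa_0}$. Moreover, the coefficient of $Y^{a_0}$ in $P$ equals $c$, which is nonzero, so $P\neq 0$.

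Second, since $H$ is graded, condition~(2) of Proposition~\ref{prop:graded-ideal-characterization} shows that $P\in H$.

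Third, I compare leading terms. Every monomial in the support of $P$ also lies in the support of $Q$, so it is $\preceq Y^{a_0}$. Since $Y^{a_0}$ itself occurs in $P$ with coefficient $c$, it is the largest monomial of $P$. This gives $\LT_\prec(P)=cY^{a_0}=\LT_\prec(Q)$.

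There is no real obstacle. The only point needing care is that the coefficient is part of the leading term. It is handled by noting that taking the homogeneous component does not alter the coefficients of the monomials it contains, so no cancellation or change of coefficient can occur over an arbitrary ring $R$.
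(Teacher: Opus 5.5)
Your proposal is correct and follows the paper's own argument: the paper treats this as an immediate consequence of condition~(2) of Proposition~\ref{prop:graded-ideal-characterization} together with the fact that every monomial is homogeneous under a matrix grading. Your write-up simply makes explicit what the paper leaves implicit. Each component is a subsum of the terms of $Q$ with unchanged coefficients, so the leading monomial lies in exactly one component and keeps both its coefficient and its maximality there.
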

	
	\section{Reduction to the lexicographic order}\label{sec:reduction}
	
	Throughout this section, let $R$ be a commutative ring with identity.
	Set $A=R[X_1,\ldots,X_n]$ and fix a rational monomial order
	$\prec$ on $A$. By the integral normalization theorem, choose a
	representing matrix $M$. We take $M$ to be nonnegative, integral,
	square, and of rank $n$. Thus
	\[
	X^\alpha\prec X^\beta
	\quad\Longleftrightarrow\quad
	M\alpha<_{\mathrm{lex}}M\beta.
	\]
	
	We introduce new variables $U_1,\ldots,U_n$ and set
	\[
	S=R[U_1,\ldots,U_n,X_1,\ldots,X_n].
	\]
	The ring $S$ is equipped with the lexicographic monomial order
	$\prec_{\mathrm{lex}}$ determined by
	\[
	U_1\succ\cdots\succ U_n\succ X_1\succ\cdots\succ X_n.
	\]
	Thus the entire $U$ block is compared before the $X$ block, the order
	of $U_1,\ldots,U_n$ agrees with the order of the rows of $M$, and the
	order within the $X$ block agrees with the lexicographic convention
	used in the definition of a Gr\"obner ring. The proof would remain
	valid for either ordering of the $X$ block, since comparisons of
	distinct monomials in a homogeneous component are decided in the
	$U$ block.
	
	We begin by defining the embedding that encodes the matrix comparison
	while retaining the original monomial. For
	$p=(p_1,\ldots,p_n)\in\mathbb N^n$, write
	$U^p=U_1^{p_1}\cdots U_n^{p_n}$. Define the $R$ algebra homomorphism
	$\Psi_M\colon A\to S$ by
	\begin{equation}
		\label{eq:tagged-embedding}
		\Psi_M(X^\alpha)= U^{M\alpha}X^\alpha.
	\end{equation}
	We also define $\chi\colon S\to A$ by
	$\chi(X_i)=X_i$ and $\chi(U_j)=1$. Then
	$\chi\circ\Psi_M=\operatorname{id}_A$, so $\Psi_M$ is
	injective. The factor $X^\alpha$ retains the original exponent vector,
	whereas $U^{M\alpha}$ records the comparison determined by $M$.
	
%
%
	
	Let $I=\langle f_1,\ldots,f_s\rangle\subseteq A$ be finitely
	generated. Let $H$ be the ideal of $S$ generated by
	$\Psi_M(f_1),\ldots,\Psi_M(f_s)$. To control this enlarged ideal,
	assign to a monomial $ U^pX^\alpha$ the degree
	$\deg_M( U^pX^\alpha)=p-M\alpha\in\mathbb Z^n$. This degree is
	additive under multiplication and therefore defines a $\mathbb Z^n$
	grading on $S$. In particular, every tagged monomial
	$ U^{M\alpha}X^\alpha$ has degree zero.
	
	\begin{proposition}[Homogeneous lifts]
		\label{prop:homogeneous-lifts}
		With the notation above, the following statements hold.
		\begin{enumerate}
			\item For every $f\in A$, the polynomial $\Psi_M(f)$ is homogeneous
			of degree zero.
			\item The ideal $H$ is graded.
			\item $\chi(H)=I$.
			\item Every $g\in I$ has the homogeneous lift $\Psi_M(g)\in H$, and
			$\chi(\Psi_M(g))=g$.
		\end{enumerate}
	\end{proposition}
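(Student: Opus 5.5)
The four assertions of Proposition~\ref{prop:homogeneous-lifts} are essentially formal. I will verify them in order, using only the definition of $\deg_M$, the fact that $\Psi_M$ and $\chi$ are $R$-algebra homomorphisms with $\chi\circ\Psi_M=\operatorname{id}_A$, and Proposition~\ref{prop:graded-ideal-characterization}.

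For (1), write $f=\sum_\alpha c_\alpha X^\alpha$. Since $\Psi_M$ is $R$-linear, $\Psi_M(f)=\sum_\alpha c_\alpha U^{M\alpha}X^\alpha$. Each monomial $U^{M\alpha}X^\alpha$ has degree $M\alpha-M\alpha=0$. So every term of $\Psi_M(f)$ lies in $S_0$, and $\Psi_M(f)$ is homogeneous of degree zero (or is zero). One point needs a remark: $\deg_M$ is indeed a matrix grading in the sense of Definition~\ref{def:matrix-grading}. Take $W=(I_n\mid -M)\in\mathbb Z^{n\times 2n}$ acting on the exponent vector $(p,\alpha)$. Hence the components $S_d$ give a genuine $\mathbb Z^n$-grading. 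For (2), $H$ is generated by the elements $\Psi_M(f_i)$, which are homogeneous by (1). Zero generators can be discarded, since they contribute nothing. By the implication (3)$\Rightarrow$(1) of Proposition~\ref{prop:graded-ideal-characterization}, $H$ is graded.

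For (3), first take $h=\sum_i s_i\Psi_M(f_i)\in H$ with $s_i\in S$. Applying the ring homomorphism $\chi$ gives $\chi(h)=\sum_i\chi(s_i)f_i\in I$, so $\chi(H)\subseteq I$. Conversely, let $g=\sum_i a_if_i\in I$ with $a_i\in A$. Set $h=\sum_i\Psi_M(a_i)\Psi_M(f_i)$. Since $\Psi_M$ is multiplicative, $h=\Psi_M(g)$, and $h$ lies in $H$ because $\Psi_M(a_i)\in S$. Moreover $\chi(h)=g$. This proves $I\subseteq\chi(H)$ and, at the same time, statement (4): $\Psi_M(g)\in H$, it is homogeneous of degree zero by (1), and $\chi(\Psi_M(g))=g$ by $\chi\circ\Psi_M=\operatorname{id}_A$.

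There is no real obstacle here. The only steps deserving care are the multiplicativity of $\Psi_M$ and the well-definedness of the grading. Multiplicativity holds because $\alpha\mapsto M\alpha$ is additive, so $\Psi_M(X^\alpha X^\beta)=U^{M(\alpha+\beta)}X^{\alpha+\beta}=\Psi_M(X^\alpha)\Psi_M(X^\beta)$. Well-definedness of the grading is handled by the explicit matrix $W$ above.
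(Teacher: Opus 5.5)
Your proposal is correct and follows essentially the same route as the paper: every term of $\Psi_M(f)$ has degree zero for (1), generation by homogeneous elements for (2), and for (3) and (4) applying $\chi$ together with $\Psi_M(g)=\sum_i\Psi_M(q_i)\Psi_M(f_i)$. Your explicit verification of both inclusions in (3) and the matrix $W=(I_n\mid -M)$ justifying the grading only spell out details that the paper leaves implicit.
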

	
	\begin{proof}
		Every term of $\Psi_M(f)$ has degree zero, which proves the first
		statement. The ideal $H$ is generated by homogeneous elements, so it
		is graded. Since $\chi(\Psi_M(f_i))=f_i$, one has
		$\chi(H)=I$. Finally, if $g=\sum_i q_i f_i$, then
		$\Psi_M(g)=\sum_i\Psi_M(q_i)\Psi_M(f_i)$ belongs to $H$, is
		homogeneous of degree zero, and dehomogenizes to $g$.
	\end{proof}
	
	The grading ties the $U$ exponents to the original exponent vectors.
	Consequently, the lexicographic order on each homogeneous component
	reproduces the original matrix order.
	
	\begin{lemma}[Monomials in a homogeneous component]
		\label{lem:homogeneous-component-structure}
		Let $P\in S$ be homogeneous. If $ U^pX^\alpha$ and
		$X^\beta U^r$ occur in $P$, then $p-r=M(\alpha-\beta)$. Equivalently,
		if $P$ has degree $d\in\mathbb Z^n$, every monomial in its support has
		the form $X^\alpha U^{d+M\alpha}$, where
		$d+M\alpha\in\mathbb N^n$.
	\end{lemma}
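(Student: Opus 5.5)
The plan is to read the lemma directly off the definition of the grading $\deg_M$. By Definition~\ref{def:group-grading}, a nonzero homogeneous $P$ of degree $d$ lies in the component $S_d$. Under the matrix grading of Definition~\ref{def:matrix-grading}, $S_d$ is the direct sum of the free $R$-modules $RU^pX^\alpha$ over those monomials with $\deg_M(U^pX^\alpha)=d$. Here the degree matrix is $W=(I_n\mid -M)\in\mathbb Z^{n\times 2n}$ acting on the exponent vector $(p,\alpha)$. So the first step is to note that every monomial occurring in $P$ has degree exactly $d$, since monomials form an $R$-basis of $S$ and the decomposition into components is unique.

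Next, fix two monomials $U^pX^\alpha$ and $U^rX^\beta$ in the support of $P$ (the statement writes the second as $X^\beta U^r$, which is the same monomial). Both have degree $d$, so
\[
p-M\alpha=d=r-M\beta .
\]
Subtracting the two expressions gives $p-r=M(\alpha-\beta)$, which is the first assertion.

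For the equivalent formulation, take any monomial $U^pX^\alpha$ in the support. The relation $p-M\alpha=d$ gives $p=d+M\alpha$, so the monomial equals $X^\alpha U^{d+M\alpha}$. Since $p$ is an exponent vector of a genuine monomial of $S$, it lies in $\mathbb N^n$, so $d+M\alpha\in\mathbb N^n$.

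Going back, suppose every monomial in the support has the form $X^\alpha U^{d+M\alpha}$. Then each has degree $(d+M\alpha)-M\alpha=d$, which recovers the pairwise relation. Conversely, the pairwise relation says $p-M\alpha$ is the same vector for every monomial in the support; calling it $d$, this is exactly the degree of $P$. This settles the word ``equivalently.''

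There is no real obstacle. The only point needing care is that homogeneity of $P$ really forces each individual monomial in its support to have degree $d$. This holds because $S$ is free over $R$ on the monomials and the graded pieces are spanned by disjoint sets of monomials, so no cancellation between components can occur.
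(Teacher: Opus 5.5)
Your proposal is correct and follows the paper's one-line argument: both monomials lie in the same graded component, so $p-M\alpha=r-M\beta$, and subtracting (or solving for $p$) gives both formulations. You also justify why homogeneity forces every support monomial to have degree $d$, namely that the components are spanned by disjoint sets of monomials; the paper leaves this implicit.
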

	
	\begin{proof}
		The two monomials have the same degree, so
		$p-M\alpha=r-M\beta$.
	\end{proof}
	
	\begin{lemma}[Exact dehomogenization of leading terms]
		\label{thm:exact-leading-term-correspondence}
		Let $0\neq P\in S$ be homogeneous for the grading $\deg_M$. Then
		$\chi(P)\neq0$ and
		\begin{equation}
			\label{eq:exact-leading-term-correspondence}
			\chi\!\left(\LT_{\prec_{\mathrm{lex}}}(P)\right)
			=
			\LT_\prec\!\left(\chi(P)\right).
		\end{equation}
	\end{lemma}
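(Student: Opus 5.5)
Let $P$ be homogeneous of degree $d\in\mathbb Z^n$. By Lemma~\ref{lem:homogeneous-component-structure}, write
\[
P=\sum_{\alpha\in\Lambda} c_\alpha\, U^{d+M\alpha}X^\alpha ,
\]
where $\Lambda\subseteq\mathbb N^n$ is finite and every $c_\alpha\neq0$. The plan is to show that distinct monomials of $P$ carry distinct $X$-exponents. Once this is known, $\chi$ maps the support of $P$ bijectively onto the support of $\chi(P)$ and keeps the coefficients. So the proof reduces to comparing the two orders along this bijection.

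\emph{First step (no collisions).} If two monomials $U^pX^\alpha$ and $U^rX^\beta$ of $P$ have $\alpha=\beta$, then Lemma~\ref{lem:homogeneous-component-structure} gives $p-r=M(\alpha-\beta)=0$. Hence the monomials coincide, so the map $U^{d+M\alpha}X^\alpha\mapsto X^\alpha$ is injective on the support of $P$. Therefore
\[
\chi(P)=\sum_{\alpha\in\Lambda}c_\alpha X^\alpha
\]
has exactly the same nonzero coefficients on distinct monomials. In particular $\chi(P)\neq0$ because $P\neq0$. This step works over an arbitrary $R$, since no coefficients are added together.

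\emph{Second step (order comparison).} Take distinct $\alpha,\beta\in\Lambda$. Under $\prec_{\mathrm{lex}}$ the $U$ block is compared first, so we compare $d+M\alpha$ with $d+M\beta$ lexicographically. Lexicographic comparison is translation invariant, so this is the same as comparing $M\alpha$ with $M\beta$. Since $M$ has rank $n$ (Theorem~\ref{thm:integral-normalization}) and $\alpha\neq\beta$, we have $M\alpha\neq M\beta$. So the $U$ block already decides the comparison, and the $X$ block never has to be consulted. It follows that $U^{d+M\alpha}X^\alpha\prec_{\mathrm{lex}}U^{d+M\beta}X^\beta$ holds if and only if $M\alpha<_{\mathrm{lex}}M\beta$, that is, if and only if $X^\alpha\prec X^\beta$.

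\emph{Conclusion.} The bijection between supports is therefore order preserving and preserves coefficients. Hence the $\prec_{\mathrm{lex}}$-maximal term $c_{\alpha_0}U^{d+M\alpha_0}X^{\alpha_0}$ of $P$ is sent by $\chi$ to $c_{\alpha_0}X^{\alpha_0}$, which is the $\prec$-maximal term of $\chi(P)$. This is \eqref{eq:exact-leading-term-correspondence}.

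The only delicate point is the second step: one must check that the lexicographic comparison in $S$ is settled entirely inside the $U$ block. This is exactly where the rank condition on $M$ and the homogeneity of $P$ are used, the latter through the common shift $d$. The remaining steps are routine bookkeeping.
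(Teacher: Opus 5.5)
Your proposal is correct and follows essentially the same route as the paper's proof. Both use the homogeneous-component lemma to rule out collisions under $\chi$, then use the rank of $M$ together with the relation $p-r=M(\alpha-\beta)$ (your translation-invariance remark) to show that the lexicographic comparison is decided in the $U$ block and matches $\prec$.
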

	
	\begin{proof}
		Suppose that two monomials $ U^pX^\alpha$ and $U^rX^\beta $ in the
		support of $P$ have the same image under $\chi$. Then
		$\alpha=\beta$, and Lemma~\ref{lem:homogeneous-component-structure}
		gives $p=r$. Thus distinct support monomials do not collide under
		$\chi$, and their coefficients remain unchanged. Hence
		$\chi(P)\neq0$.
		
		Now take two distinct monomials $ U^pX^\alpha$ and $U^rX^\beta $ in
		the support of $P$. Their $U$ exponent vectors cannot be equal,
		because $p=r$ would imply $M\alpha=M\beta$, and the rank condition on
		$M$ would give $\alpha=\beta$. Their comparison under
		$\prec_{\mathrm{lex}}$ is therefore decided in the $U$ block. By
		Lemma~\ref{lem:homogeneous-component-structure}, one has
		$p<_{\mathrm{lex}}r$ if and only if
		$M\alpha<_{\mathrm{lex}}M\beta$, which is equivalent to
		$X^\alpha\prec X^\beta$. The largest monomial in the support of $P$
		therefore dehomogenizes to the largest monomial in the support of
		$\chi(P)$, with the same coefficient. This proves
		\eqref{eq:exact-leading-term-correspondence}.
	\end{proof}
	
	\begin{remark}[The role of homogeneity]
		\label{rem:homogeneity-essential}
		The homogeneity assumption cannot be omitted. For $n=1$ and $M=(1)$,
		consider $P=U^2+XU$ with $U\succ X$. Then
		$\chi(\LT_{\prec_{\mathrm{lex}}}(P))=1$, whereas
		$\LT_\prec(\chi(P))=\LT_\prec(1+X)=X$.
	\end{remark}
	
	We can now descend finite generation from the enlarged ideal to the
	original ideal. The gradedness of $H$ allows each leading term
	generator to be represented by a homogeneous element before
	Lemma~\ref{thm:exact-leading-term-correspondence} is applied.
	
	\begin{theorem}[Reduction to the lexicographic order]
		\label{thm:fixed-ideal-reduction}
		Let $I=\langle f_1,\ldots,f_s\rangle\subseteq A$, and let $H\subseteq
		S$ be the ideal constructed above. If
		$\LT_{\prec_{\mathrm{lex}}}(H)$ is finitely generated, then
		$\LT_\prec(I)$ is finitely generated. Equivalently, if $H$ admits a
		finite Gr\"obner basis with respect to $\prec_{\mathrm{lex}}$, then
		$I$ admits a finite Gr\"obner basis with respect to $\prec$.
	\end{theorem}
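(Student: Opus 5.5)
The plan is to take a finite generating set of $\LT_{\prec_{\mathrm{lex}}}(H)$, replace it by one consisting of leading terms of homogeneous elements of $H$, and push it down to $A$ with $\chi$. Since $\LT_{\prec_{\mathrm{lex}}}(H)$ is generated by the leading terms of elements of $H$, any finite generating set can be replaced by finitely many leading terms $\LT_{\prec_{\mathrm{lex}}}(Q_1),\ldots,\LT_{\prec_{\mathrm{lex}}}(Q_t)$ with $0\neq Q_j\in H$. Indeed, each given generator is a finite combination of such leading terms, and collecting all terms used gives a finite set. By Proposition~\ref{prop:homogeneous-lifts}, $H$ is graded. Corollary~\ref{cor:component-containing-leading-term} then lets us replace each $Q_j$ by the homogeneous component $P_j\in H$ with $\LT_{\prec_{\mathrm{lex}}}(P_j)=\LT_{\prec_{\mathrm{lex}}}(Q_j)$. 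Set $g_j=\chi(P_j)$. By Proposition~\ref{prop:homogeneous-lifts}, $g_j\in\chi(H)=I$, and Lemma~\ref{thm:exact-leading-term-correspondence} gives $g_j\neq 0$ and $\LT_\prec(g_j)=\chi(\LT_{\prec_{\mathrm{lex}}}(P_j))$. The claim is that $\{g_1,\ldots,g_t\}$ is a Gr\"obner basis of $I$ with respect to $\prec$.

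One inclusion is immediate: each $\LT_\prec(g_j)$ lies in $\LT_\prec(I)$. For the converse, take $0\neq g\in I$. By Proposition~\ref{prop:homogeneous-lifts}, $\Psi_M(g)\in H$ is homogeneous of degree zero and $\chi(\Psi_M(g))=g$. Lemma~\ref{thm:exact-leading-term-correspondence} then gives $\LT_\prec(g)=\chi(\LT_{\prec_{\mathrm{lex}}}(\Psi_M(g)))$. We may write $\LT_{\prec_{\mathrm{lex}}}(\Psi_M(g))=\sum_j h_j\LT_{\prec_{\mathrm{lex}}}(P_j)$ with $h_j\in S$. Applying the ring homomorphism $\chi$ gives
\[
\LT_\prec(g)=\sum_j\chi(h_j)\,\LT_\prec(g_j)\in\bigl\langle \LT_\prec(g_j)\mid 1\le j\le t\bigr\rangle .
\]
Hence $\LT_\prec(I)$ is generated by the $\LT_\prec(g_j)$. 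The equivalent formulation with Gr\"obner bases then follows from Proposition~\ref{prop:finite-grobner-basis}, noting that $I$ is finitely generated and, by construction, so is $H$.

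The main obstacle is the step where homogeneity is needed. Lemma~\ref{thm:exact-leading-term-correspondence} fails for nonhomogeneous elements, as Remark~\ref{rem:homogeneity-essential} shows. The argument must therefore ensure that $\chi$ is only ever applied to leading terms of homogeneous elements. This is exactly why the generators are first replaced by homogeneous components via Corollary~\ref{cor:component-containing-leading-term}, and why the lift $\Psi_M(g)$ is used rather than an arbitrary preimage of $g$ in $H$. The remaining steps are formal consequences of $\chi$ being a ring homomorphism.
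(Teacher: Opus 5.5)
Your proposal is correct and follows essentially the same route as the paper: replace a finite lex Gr\"obner basis of $H$ by the homogeneous components carrying its leading terms, dehomogenize them with $\chi$, and for $0\neq g\in I$ apply $\chi$ to a leading-term relation for the homogeneous lift $\Psi_M(g)$. The differences are cosmetic. You extract the $Q_j$ directly rather than citing Proposition~\ref{prop:finite-grobner-basis}, and the paper adjoins $f_1,\ldots,f_s$ to the final Gr\"obner basis, which is unnecessary under its own definition.
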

	
	\begin{proof}
		The zero ideal is immediate, so assume $I\neq0$. Since $H$ is
		finitely generated, Proposition~\ref{prop:finite-grobner-basis}
		and the hypothesis yield a finite Gr\"obner basis
		$Q_1,\ldots,Q_t$ of $H$. For each $j$, let $P_j$ be
		the homogeneous component of $Q_j$ that contains
		$\LT_{\prec_{\mathrm{lex}}}(Q_j)$. Since $H$ is graded, one has
		$P_j\in H$ and
		$\LT_{\prec_{\mathrm{lex}}}(P_j)=
		\LT_{\prec_{\mathrm{lex}}}(Q_j)$. Thus the leading terms of
		$P_1,\ldots,P_t$ still generate
		$\LT_{\prec_{\mathrm{lex}}}(H)$.
		
		Set $p_j=\chi(P_j)$. Then $p_j\in I$, and
		Lemma~\ref{thm:exact-leading-term-correspondence} gives
		$p_j\neq0$ and
		$\LT_\prec(p_j)=
		\chi(\LT_{\prec_{\mathrm{lex}}}(P_j))$. Let $0\neq g\in I$. Proposition~\ref{prop:homogeneous-lifts} shows
		that $\Psi_M(g)$ is a nonzero homogeneous element of $H$. Since the
		leading terms of the $P_j$ generate
		$\LT_{\prec_{\mathrm{lex}}}(H)$, there exist
		$A_1,\ldots,A_t\in S$ such that
		\[
		\LT_{\prec_{\mathrm{lex}}}(\Psi_M(g))
		=
		\sum_{j=1}^t A_j\LT_{\prec_{\mathrm{lex}}}(P_j).
		\]
		Applying $\chi$ and using
		Lemma~\ref{thm:exact-leading-term-correspondence} gives
		$\LT_\prec(g)=\sum_{j=1}^t\chi(A_j)\LT_\prec(p_j)$.
		Therefore $\LT_\prec(I)\subseteq
		\langle\LT_\prec(p_1),\ldots,\LT_\prec(p_t)\rangle$. The reverse
		inclusion follows from $p_j\in I$, and hence
		\begin{equation}
			\label{eq:leading-term-ideal-descent}
			\LT_\prec(I)
			=
			\left\langle\LT_\prec(p_1),\ldots,\LT_\prec(p_t)\right\rangle.
		\end{equation}
		Finally, the finite set
		$\{f_1,\ldots,f_s,p_1,\ldots,p_t\}$ generates $I$, and its leading
		terms generate $\LT_\prec(I)$. It is therefore a finite Gr\"obner
		basis of $I$.
	\end{proof}
	
	The fixed ideal reduction immediately gives the corresponding global
	statement. The increase in the number of variables is the reason that
	the lexicographic hypothesis must be imposed in every finite number
	of variables.
	
	\begin{theorem}
		\label{thm:rational-characterization-grobner-rings}
		Let $R$ be a commutative ring with identity. Then the following
		conditions are equivalent.
		\begin{enumerate}
			\item[(1)] For every $n\geq 1$, every rational monomial order
			$\prec$ on $R[X_1,\ldots,X_n]$, and every finitely generated
			ideal $I\subseteq R[X_1,\ldots,X_n]$, the leading term ideal
			$\LT_\prec(I)$ is finitely generated.
			
			\item[(2)] The ring $R$ is a Gr\"obner ring.
		\end{enumerate}
	\end{theorem}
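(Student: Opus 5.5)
The two implications are handled separately. The direction (1)$\Rightarrow$(2) is immediate once we check that the lexicographic order $X_1\succ\cdots\succ X_n$ is itself a rational monomial order. It is represented by the identity matrix $I_n\in\mathbb N^{n\times n}$, which has rank $n$: indeed $X^\alpha\prec_{\mathrm{lex}}X^\beta$ holds exactly when $I_n\alpha<_{\mathrm{lex}}I_n\beta$. By Proposition~\ref{prop:nonnegative-matrix-order} this defines a monomial order, and it is rational in the sense of Definition~\ref{def:rational-order}. Specializing condition~(1) to this order, for every $n\geq1$, yields that $R$ is $n$-Gr\"obner for all $n$. That is precisely Definition~\ref{def:grobner-ring-book}.

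For (2)$\Rightarrow$(1), fix $n\geq1$, a rational monomial order $\prec$ on $A=R[X_1,\ldots,X_n]$, and a finitely generated ideal $I=\langle f_1,\ldots,f_s\rangle$. By Theorem~\ref{thm:integral-normalization} we choose $M\in\mathbb N^{n\times n}$ of rank $n$ representing $\prec$, and we form $S=R[U_1,\ldots,U_n,X_1,\ldots,X_n]$, the tagged embedding $\Psi_M$, and $H=\langle\Psi_M(f_1),\ldots,\Psi_M(f_s)\rangle$, exactly as in Section~\ref{sec:reduction}. The ideal $H$ is finitely generated in a polynomial ring in $2n$ variables. The order $\prec_{\mathrm{lex}}$ with $U_1\succ\cdots\succ U_n\succ X_1\succ\cdots\succ X_n$ is the standard lexicographic order after renaming these $2n$ variables as $Y_1,\ldots,Y_{2n}$ in that sequence. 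Since $R$ is $2n$-Gr\"obner, $\LT_{\prec_{\mathrm{lex}}}(H)$ is finitely generated, and Theorem~\ref{thm:fixed-ideal-reduction} then gives that $\LT_\prec(I)$ is finitely generated.

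The only point needing care, and the expected (mild) obstacle, is the bookkeeping of variable counts and orderings. The hypothesis must be applied in $2n$ variables rather than $n$, which is why condition~(2) is required for every $n$ and not just for the given one. We must also make sure that the renaming of variables respects the convention of Definition~\ref{def:grobner-ring-book}. This holds because the $n$-Gr\"obner property is invariant under $R$-algebra isomorphisms that permute variables and carry the chosen lex order to the standard one. Apart from this check, the argument is a direct assembly of results already established.
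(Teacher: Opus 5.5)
Your proposal is correct and follows the paper's proof essentially verbatim: (1)$\Rightarrow$(2) specializes to the lexicographic order represented by the identity matrix, and (2)$\Rightarrow$(1) normalizes $\prec$ to a rank-$n$ matrix $M\in\mathbb N^{n\times n}$, applies the $2n$-Gr\"obner property to $H\subseteq R[U_1,\ldots,U_n,X_1,\ldots,X_n]$, and concludes with Theorem~\ref{thm:fixed-ideal-reduction}. The variable-ordering check you flag is the same step the paper handles by setting $Z_i=U_i$ and $Z_{n+i}=X_i$, so that $U_1\succ\cdots\succ U_n\succ X_1\succ\cdots\succ X_n$ becomes the standard lexicographic order on $2n$ variables.
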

	
	\begin{proof}
		Assume that $R$ is a Gr\"obner ring. Let $n\geq 1$, let $\prec$
		be a rational monomial order on $R[X_1,\ldots,X_n]$, and let
		$I\subseteq R[X_1,\ldots,X_n]$ be a finitely generated ideal.
		Choose a matrix $M\in\mathbb N^{n\times n}$ of rank $n$ representing
		$\prec$, and form the ideal $H$ in
		$S=R[U_1,\ldots,U_n,X_1,\ldots,X_n]$. Set $Z_i=U_i$ for
		$1\leq i\leq n$ and $Z_{n+i}=X_i$ for $1\leq i\leq n$. Then
		$S=R[Z_1,\ldots,Z_{2n}]$, and the order
		$U_1\succ\cdots\succ U_n\succ X_1\succ\cdots\succ X_n$ is exactly
		the lexicographic order $Z_1\succ\cdots\succ Z_{2n}$. By
		Definition~\ref{def:grobner-ring-book},
		$\LT_{\prec_{\mathrm{lex}}}(H)$ is finitely generated.
		Theorem~\ref{thm:fixed-ideal-reduction} then gives finite generation
		of $\LT_\prec(I)$.
		
		Conversely, the lexicographic order
		$X_1\succ\cdots\succ X_n$ is rational and is represented by the
		identity matrix. Hence the second condition implies the
		lexicographic finite generation property in every finite number of
		variables, which is precisely the Gr\"obner ring property.
	\end{proof}

\begin{remark}[Uniformity in the number of variables]
	\label{rem:scope-reduction}
	Theorem~\ref{thm:rational-characterization-grobner-rings} is
	global with respect to the number of variables. Indeed, a rational
	monomial order on $R[X_1,\ldots,X_n]$ is reduced to a
	lexicographic problem in
	$R[U_1,\ldots,U_n,X_1,\ldots,X_n]$, which has $2n$ variables.
	Thus the lexicographic finiteness property is required in every
	finite number of variables.
	
	This differs from Conjecture~3 of Pola and Yengui
	\cite{PolaYengui2014}, which concerns a fixed number of variables
	and asks whether the property for one elimination order implies the
	same property for every monomial order on the same polynomial ring.
	The present construction does not show that the lexicographic
	property on $R[X_1,\ldots,X_n]$ alone implies the corresponding
	property for rational monomial orders on that same ring.
\end{remark}
	
	\section{Consequences for valuation rings}
	\label{sec:valuation-rings}
	
	We now apply the main theorem to the known classifications of
	Gr\"obner valuation rings. Following Pola and Yengui, a valuation ring
	is a commutative ring in which any
	two elements are comparable under divisibility, and zero divisors are
	allowed. A valuation ring without zero divisors is called a valuation
	domain \cite{PolaYengui2014}.
	
	For valuation domains, Yengui's lexicographic theorem identifies the
	Gr\"obner ring property with Krull dimension at most one
	\cite{Yengui2014Lex}. The cited paper writes the variables in the
	reverse displayed priority. Reversing their names gives the
	lexicographic convention $X_1\succ\cdots\succ X_n$ used here. We
	therefore obtain the following consequence for rational monomial
	orders.
	
	\begin{corollary}[Valuation domains]
		\label{cor:rational-orders-valuation-domains}
		Let $V$ be a valuation domain. The following conditions are
		equivalent.
		\begin{enumerate}
			\item $V$ is a Gr\"obner ring.
			\item $\dim V\leq 1$.
			\item For every $n\geq 1$, every rational monomial order $\prec$ on
			$V[X_1,\ldots,X_n]$, and every finitely generated ideal
			$I\subseteq V[X_1,\ldots,X_n]$, the ideal $\LT_\prec(I)$ is finitely
			generated.
		\end{enumerate}
	\end{corollary}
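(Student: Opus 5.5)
The plan is to chain the equivalences through condition (1), with the rational-order characterization doing the main work. The equivalence of (1) and (3) is exactly Theorem~\ref{thm:rational-characterization-grobner-rings} specialized to $R=V$: condition (3) is assertion~(1) of that theorem and condition (1) is assertion~(2). This step holds over an arbitrary commutative ring, so nothing about valuation domains is needed here.

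It then remains to prove the equivalence of (1) and (2). For this I will invoke Proposition~\ref{prop:valuation-domain-grobner}, which for a valuation domain $V$ identifies the Gr\"obner ring property with $\dim V\leq 1$. The multivariable lexicographic part of that proposition comes from \cite{Yengui2014Lex}. Combining this with the first step gives the cycle $(1)\Leftrightarrow(2)$ and $(1)\Leftrightarrow(3)$, which proves the corollary.

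The only delicate point is that the definition of a Gr\"obner ring must match the convention used in the cited classification. Definition~\ref{def:grobner-ring-book} fixes the priority $X_1\succ\cdots\succ X_n$, whereas \cite{Yengui2014Lex} uses the reverse priority. I will handle this by noting that the automorphism of $V[X_1,\ldots,X_n]$ given by $X_i\mapsto X_{n+1-i}$ carries finitely generated ideals to finitely generated ideals. It also carries the leading term ideal for one lexicographic order onto the leading term ideal of the image ideal for the reversed order. Hence the $n$-Gr\"obner property does not depend on which of the two orderings is chosen, as already remarked after Definition~\ref{def:grobner-ring-book}.

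No other obstacle arises. In particular, there is no need to pass through the $1$-Gr\"obner property or to use the archimedean characterization, although both could be added to the list of equivalent conditions for free via Proposition~\ref{prop:valuation-domain-grobner}.
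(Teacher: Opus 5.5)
Your proposal is correct and follows the paper's proof: (1)$\Leftrightarrow$(3) is Theorem~\ref{thm:rational-characterization-grobner-rings} applied to $R=V$, and (1)$\Leftrightarrow$(2) is Yengui's lexicographic classification. The paper handles the reversed variable priority in the cited paper by renaming the variables, as it remarks just before the corollary, and your automorphism $X_i\mapsto X_{n+1-i}$ simply spells out that renaming.
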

	
	\begin{proof}
		The equivalence of the first two conditions is Yengui's lexicographic
		classification. The equivalence of the first and third conditions is
		Theorem~\ref{thm:rational-characterization-grobner-rings}.
	\end{proof}
	
	The same argument applies to valuation rings with zero divisors. A
	ring is coherent if every finitely generated ideal is finitely
	presented. A valuation ring $V$ is archimedean if, for every nonzero
	$a,b\in\operatorname{Rad}(V)$, there exists $k\geq 1$ such that
	$a$ divides $b^k$.
	Pola and Yengui proved that a valuation ring is a Gr\"obner ring if
	and only if it is coherent and archimedean. Their proof of the
	sufficient direction uses the lexicographic order, so the
	classification applies to the convention adopted here
	\cite{PolaYengui2014}.
	
	\begin{corollary}[Valuation rings]
		\label{cor:rational-orders-valuation-rings}
		Let $V$ be a valuation ring, possibly with zero divisors. The
		following conditions are equivalent.
		\begin{enumerate}
			\item $V$ is a Gr\"obner ring.
			\item The ring $V$ is coherent and archimedean.
			\item For every $n\geq 1$, every rational monomial order $\prec$ on
			$V[X_1,\ldots,X_n]$, and every finitely generated ideal
			$I\subseteq V[X_1,\ldots,X_n]$, the ideal $\LT_\prec(I)$ is finitely
			generated.
		\end{enumerate}
	\end{corollary}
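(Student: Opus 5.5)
The plan is to derive Corollary~\ref{cor:rational-orders-valuation-rings} by chaining two earlier results, in exactly the same way as in the valuation-domain case (Corollary~\ref{cor:rational-orders-valuation-domains}).

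\textbf{(1)$\Leftrightarrow$(2).} This is the Pola--Yengui classification recorded in Proposition~\ref{prop:valuation-ring-grobner}(2): a valuation ring, possibly with zero divisors, is a Gr\"obner ring if and only if it is coherent and archimedean. The only point to check is that their definition of Gr\"obner ring agrees with Definition~\ref{def:grobner-ring-book}.

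\begin{itemize}
\item Necessity: Gr\"obner implies coherent and archimedean. Coherence comes from Proposition~\ref{prop:grobner-stably-coherent}. Archimedeanity comes from Proposition~\ref{prop:valuation-ring-grobner}(1), since a Gr\"obner ring is in particular $1$-Gr\"obner. Both arguments use only the lexicographic property with $n=1$, or with finitely many variables, so no convention issue arises.
\item Sufficiency: the preceding paragraph of the paper notes that Pola and Yengui establish it with respect to the lexicographic order. As explained there, reversing the names of the variables transfers the result to the convention $X_1\succ\cdots\succ X_n$.
\end{itemize}

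\textbf{(1)$\Leftrightarrow$(3).} This is exactly Theorem~\ref{thm:rational-characterization-grobner-rings} applied with $R=V$. That theorem holds for an arbitrary commutative ring with identity, so zero divisors and non-coherence cause no difficulty.

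\textbf{Main obstacle.} The only delicate step is the convention check in the sufficiency direction. The earlier formulation in \cite{PolaYengui2014} is existential, phrased in terms of elimination orders, whereas here we need the lexicographic statement for every finite number of variables. I would handle this by citing that their proof of sufficiency constructs finite Gr\"obner bases for the lexicographic order in any number of variables. Once that is granted, the corollary follows formally.
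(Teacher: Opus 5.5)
Your proposal is correct and follows the paper's proof: (1)$\Leftrightarrow$(2) is the Pola--Yengui classification, whose sufficiency direction is proved for the lexicographic order, as the paper notes just before the corollary. The equivalence (1)$\Leftrightarrow$(3) is Theorem~\ref{thm:rational-characterization-grobner-rings} with $R=V$; your extra unpacking of necessity via Propositions~\ref{prop:grobner-stably-coherent} and~\ref{prop:valuation-ring-grobner}(1) is valid but not needed.
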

	
	\begin{proof}
		The equivalence of the first two conditions is the classification of
		Pola and Yengui. The equivalence of the first and third conditions is
		Theorem~\ref{thm:rational-characterization-grobner-rings}.
	\end{proof}
	
	Equivalently, the second condition in
	Corollary~\ref{cor:rational-orders-valuation-rings} may be replaced by
	the requirement that either $V$ is a valuation domain of Krull
	dimension at most one, or $V$ has Krull dimension zero and
	$\operatorname{Ann}_V(a)$ is finitely generated for every $a\in V$
	\cite{PolaYengui2014}.
	
	Lombardi, Neuwirth, and Yengui cite an unpublished 2024 preprint by
	Yengui entitled \emph{A solution to the Gr\"obner ring conjecture}
	and describe it as a recent solution
	\cite{LombardiNeuwirthYengui2026}. Since that manuscript was not
	available to us at the time of writing, a comparison of the two
	approaches is beyond the scope of the present paper. We therefore
	make no claim of priority for the valuation-ring consequences above.
	Our main contribution is the general rational characterization of
	Gr\"obner rings over arbitrary commutative coefficient rings.
	
	We finish by explaining the restriction to rational monomial orders.
	The construction in Section~\ref{sec:reduction} starts with a finite
	nonnegative integral matrix representation of the order. Such a
	matrix supplies the exponents in the tagged embedding
	$\Psi_M(X^\alpha)= U^{M\alpha}X^\alpha$. An irrational monomial order
	has no finite rational matrix representation, so this construction is
	not available.
	
	The restriction is not merely a limitation of the proof. Yengui
	constructed a valuation domain of Krull dimension one and a finitely
	generated ideal in two variables whose leading term ideal is not
	finitely generated for a suitable irrational monomial order
	\cite{Yengui2021Counterexample}. Such a valuation domain is a
	Gr\"obner ring by the lexicographic classification, but it does not
	satisfy the corresponding property for all monomial orders. Thus the
	rationality hypothesis in
	Theorem~\ref{thm:rational-characterization-grobner-rings} cannot be
	removed in general.
	
	\section{Conclusion}
	\label{sec:conclusion}
	
	Using the lexicographic definition of Gr\"obner rings, we have proved
	that a commutative ring $R$ is a Gr\"obner ring if and only if, in
	every finite number of variables, the leading term ideal of every
	finitely generated polynomial ideal is finitely generated with
	respect to every rational monomial order. This gives the rational
	monomial order form of the equivalence between the first two
	conditions in the Leading Terms Ideals Conjecture. The remaining
	comparison with the one-variable condition is a different problem and
	is not settled by the present reduction.
	
	The proof is obtained from a fixed-ideal construction. A rational
	order represented by a nonnegative integral matrix $M$ is encoded by
	the tagged embedding $\Psi_M(X^\alpha)= U^{M\alpha}X^\alpha$. The
	associated grading $\deg_M( U^pX^\alpha)=p-M\alpha$ makes
	dehomogenization collision free on homogeneous components and gives
	an exact leading term identity. Applying dehomogenization to an ideal
	membership relation then descends finite generation from the enlarged
	lexicographic ideal to the original ideal. The argument uses neither
	Noetherianity nor the absence of zero divisors.
	
	The theorem also clarifies the role of rationality in the theory of
	Gr\"obner rings. The lexicographic property automatically extends to
	all rational monomial orders, but it need not extend to irrational
	orders. Together with the known classifications of valuation domains
	and valuation rings, the result yields the corresponding rational
	order classifications. The irrational counterexample shows that an unrestricted statement
	covering all monomial orders is impossible in general. Thus the
	restriction to rational monomial orders is mathematically
	substantive.
	
	\section*{Acknowledgments}
	This research was supported by the STU Scientific Research Initiation
	Grant under No. NTF24023T.
	\bibliographystyle{elsarticle-num-names-alpha}
	\bibliography{references}
	
\end{document}